\documentclass[11pt]{amsart}

\usepackage{bm}
\usepackage{fullpage}
\usepackage{amssymb}
\usepackage{amsmath,amsfonts,amsthm}
\usepackage{hyperref}
\usepackage{color}
\usepackage{cite}

\newtheorem{theorem}{Theorem}

\newtheorem{lemma}[theorem]{Lemma}

\newtheorem{Claim}[theorem]{Claim}

%




\title{ Multipartite Ramsey number $m_j(K_m, nK_2)$ }

\author{Yaser Rowshan$^1$}
\keywords{Ramsey numbers, Multipartite Ramsey numbers, Stripes.}
\subjclass[2010]{05D10, 05C55.}
\address{$^1$Y. Rowshan, 
	Department of Mathematics, Institute for Advanced Studies in Basic Sciences (IASBS), Zanjan 45137-66731, Iran}
\email{y.rowshan@iasbs.ac.ir}

\begin{document}
	\maketitle 
	
\begin{abstract}
	Assume that $K_{j\times n}$ be a complete, multipartite graph consisting of $j$ partite sets and $n$ vertices
	in each partite set. For given graphs $G_1, G_2,\ldots, G_n$, the  multipartite Ramsey number (M-R-number) $m_j(G_1, G_2, \ldots,G_n)$ is the smallest integer $t$ such that  for any $n$-edge-coloring $(G^1,G^2,\ldots, G^n)$ of  the edges of $K_{j\times t}$, $G^i$ contains a monochromatic copy of $G_i$  for at least one $i$. The size of  M-R-number $m_j(nK_2, C_m)$  for $j, n\geq 2$ and $4\leq m\leq 6$,
	 the size of  M-R-number $m_j(nK_2, C_7)$  for $j \geq 2$ and $n\geq 2$,   the size of  M-R-number  $m_j(nK_2,K_3)$, for each $j,n\geq 2$, the size of  M-R-number $m_j(K_3,K_3, n_1K_2,n_2K_,\ldots,n_iK_2)$ for $j \leq 6$ and $i,n_i\geq 1$ and the size of  M-R-number $m_j(K_3,K_3, nK_2)$ for $j \geq 2$ and $n\geq 1$ have been computed in several papers up to now. In this article we obtain the values of  M-R-number  $m_j(K_m, nK_2)$, for each $j,n\geq 2$ and each $m\geq 4$.
\end{abstract}

\section{Introduction}

After Erdös and Rado published the  paper \cite{erdos1956partition} in 1956, Ramsey theory has recived attention of many mathematicians because of its vast range of application in several fields such as  graph theory, number theory, geometry, and logic \cite{rosta2004ramsey}. The classical Ramsey problem states that given numbers $n_1,\ldots,n_k$, any $k$-edge-coloring of any sufficiently large complete graph $K_n$ contains a monochromatic complete subgraph with $n_i$ vertices colored with the $i$th color, and ask for the minimum number $n$ satisfying this property. However, there is no loss to work in any class of graphs and their subgraphs instead of complete graphs. One can refer to \cite{barton2016ramsey, benevides2012multipartite}, and \cite{gyarfas2009multipartite} and it references for further studies.

Assume that $K_{j\times n}$ be a complete and multipartite graph consisting of $j$ partite sets and $n$ vertices
in each partite set. For given graphs $G_1, G_2,\ldots, G_n$, the   M-R-number $m_j(G_1, G_2, \ldots,G_n)$ is the smallest integer $t$, so that  for any $n$-edge-coloring $(G^1,G^2,\ldots, G^n)$ of  the edges of $K_{j\times t}$, $G^i$ contains a monochromatic copy of $G_i$  for at least on $i$.

In \cite{burger2004ramsey}, Burgeet et al. studied the M-R-number $m_j(G_1, G_2)$, where both $G_1$ and $G_2$ are a complete  multipartite graphs. Recently the  M-R-number $m_j(G_1, G_2)$ has been studied for special classes of graphs, see \cite{yek, Anie, burger2004diagonal, gholami2021bipartite} and its references, which can be naturally extent to several colors, see \cite{sy2011size, rowshan2021proof, sy2010size, sy2009path, luczak2018multipartite, lakshmi2020three}. The  M-R-number $m_j(K_{1,m},G)$, for $j = 2,3$ where $G$ is a $P_n$ or a $C_n$ is determined in \cite{lusiani2017size}. In \cite{jayawardene2016size}, aouthers determined the  M-R-number $m_j(nK_2, C_n)$ where $n\in \{3,4,5,6\}$  and $j\geq 2$. 
The values of M-R-number $m_j(nK_2, C_7)$ where $j\leq 4$ and $n\geq2$  determined in \cite{math9070764} and the values of  M-R-number $m_j(nK_2, C_7)$ where $j\geq 5$ and $n\geq 1$ computed in \cite{rowshan2021multipartite}.
The size of  M-R-number $m_j(K_3,K_3, n_1K_2,n_2K_,\ldots,n_iK_2)$ for $j \leq 6$ and $i,n_i\geq 1$ and the size of  M-R-number $m_j(K_3,K_3, nK_2)$ for $j \geq 2$ and $n\geq 1$ have been computed in  \cite{rowshan1}.

In this article we obtain the values of  M-R-number  $m_j(K_4, nK_2)$, for $j\geq 2$ and small $ n=4,5$, also we obtain some  bounds of  M-R-number  $m_j(K_m, nK_2)$, where $j,m\geq 4$ and $n\geq 2$. In particular we prove  the following results:


\begin{theorem}\label{Mth}
	For each positive integer $j\geq m$, $m\geq 4$ and  $n\geq 3$, we have: 
	\[m_j(K_m, nK_2)=   \lceil\frac{2n}{j+2-m}\rceil .\]	
\end{theorem}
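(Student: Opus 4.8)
The plan is to prove the equality $m_j(K_m, nK_2) = \lceil \frac{2n}{j+2-m}\rceil$ by establishing the two inequalities separately, treating the problem as a two-coloring question: given an edge-coloring of $K_{j\times t}$ by two colors (say ``red'' and ``blue''), we must force either a red $K_m$ or a blue matching $nK_2$. Throughout I would let $t = \lceil \frac{2n}{j+2-m}\rceil$ and focus on the complementary relationship: the blue graph fails to contain $nK_2$ precisely when its maximum matching has size at most $n-1$, which by the Gallai identity (or the Berge--Tutte formula) means the blue graph has a vertex cover / small ``matching number'' structure. The strategy is that if blue has no $nK_2$, then a small set of vertices covers all blue edges, so the remaining large set of vertices spans a graph that is entirely red, and I must show this red graph is dense enough in the multipartite host to contain $K_m$.

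For the upper bound $m_j(K_m, nK_2) \le t$, I would take any red/blue coloring of $K_{j\times t}$ and assume the blue graph contains no $nK_2$, i.e.\ its matching number $\nu_{\mathrm{blue}} \le n-1$. By the König--Egerváry / Gallai framework, there is a set $S$ of at most $2(n-1)$ vertices meeting every blue edge, so that $V \setminus S$ induces a complete multipartite \emph{red} graph. The key counting step is to verify that $|V \setminus S| = jt - |S| \ge jt - 2(n-1)$ is large enough, and spread across enough parts, to contain a red $K_m$: a $K_m$ in $K_{j\times t}$ requires $m$ vertices lying in $m$ distinct parts. I would argue that because $t(j+2-m) \ge 2n$, after deleting at most $2(n-1)$ vertices the surviving red-complete-multipartite graph still has at least $m$ nonempty parts, which suffices to embed $K_m$. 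The sharp bookkeeping is choosing $S$ to be concentrated so as to kill as many parts as possible (an adversary would put the cover into few parts), and checking the inequality $jt - 2(n-1) > (m-1)t$ is what forces at least $m$ surviving parts; this reduces to $t(j+1-m) > 2(n-1) - t$, i.e.\ $t(j+2-m) > 2n-2$, which holds by the ceiling choice of $t$.

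For the lower bound $m_j(K_m, nK_2) \ge t$, I would exhibit, for $t' = t-1$, an explicit two-coloring of $K_{j\times t'}$ with neither a red $K_m$ nor a blue $nK_2$. The natural construction is to choose exactly $m-2$ of the $j$ parts, color red \emph{all} edges incident to those parts (together with a cleverly chosen almost-complete red subgraph on them) so that no red $K_m$ can form because only $m-2$ parts can contribute a large red clique, while coloring the remaining edges blue in a way that keeps the blue matching small. Concretely, designate a red ``core'' on $m-2$ parts and make the blue graph live only on the other $j-(m-2)=j+2-m$ parts among $t'$ vertices each; a complete multipartite graph on $j+2-m$ parts of size $t'$ has matching number roughly $\lfloor \frac{(j+2-m)t'}{2}\rfloor$, and I must arrange $t'$ so this stays $\le n-1$, which is exactly $(j+2-m)t' \le 2n-2$, i.e.\ $t' \le \frac{2n-2}{j+2-m}$, consistent with $t' = \lceil \frac{2n}{j+2-m}\rceil - 1$.

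The main obstacle, and the place where the most care is needed, is the interaction between the matching-number bound and the multipartite clique constraint in \emph{both} directions simultaneously: the ceiling function makes the arithmetic delicate, and I expect the genuinely hard part to be the lower-bound construction, where the blue subgraph's matching number in a complete multipartite graph is governed not by a simple $\lfloor (\text{vertices})/2\rfloor$ but by the Gallai--Edmonds / max-part structure (the matching number of $K_{r\times t'}$ is $\min\{\lfloor rt'/2\rfloor,\, (r-1)t'\}$ by the deficiency formula). I would need to verify that for the relevant range $j \ge m \ge 4$ the binding term is $\lfloor rt'/2\rfloor$ with $r = j+2-m$, and then check that the edge cases where $rt'$ is odd or where $r=2$ do not break the construction; handling these boundary parities and confirming that no red $K_m$ sneaks in through the core-to-blue-part edges is where a full proof would have to grind out the details.
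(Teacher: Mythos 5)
Your lower-bound construction is essentially the paper's: make the blue graph the complete multipartite graph on $j+2-m$ parts of size $t-1$, so red is $(m-1)$-partite and blue has only $(j+2-m)(t-1)\le 2n-1$ vertices, hence no $nK_2$. (Your worry about the Gallai--Edmonds formula is unnecessary here: the vertex count alone bounds the matching, which is all the paper uses.)

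The upper bound, however, has a genuine gap, and it sits exactly at the step you flagged as ``sharp bookkeeping.'' Taking $S=V(M)$ for a maximum blue matching $M$ does give an all-red set $Y=V\setminus S$ with $|Y|\ge jt-2(n-1)$, but your claimed inequality $jt-2(n-1)>(m-1)t$ is \emph{false} for $t\ge 2$: it is equivalent to $t(j+1-m)>2n-2$ (you introduced a spurious $-t$ on the right-hand side when rearranging), whereas the ceiling choice only guarantees $t(j+2-m)\ge 2n$, i.e.\ $t(j+1-m)\ge 2n-t$, which exceeds $2n-2$ only when $t<2$. The correct guarantee is $|Y|\ge (m-2)t+2$, which forces $Y$ to meet at least $m-1$ parts but \emph{not} $m$; e.g.\ for $m=4$, $j=6$, $n=4$ one gets $t=2$, $|Y|\ge 6=(m-1)t$, and the six unmatched vertices can sit in exactly three parts, yielding only a red $K_3$. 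The paper closes precisely this gap with an augmenting-path argument: it extracts a red $K_{(m-1)\times 2}$ inside $Y$, picks a blue matching edge $v_1v_2$ with both ends outside those $m-1$ parts (possible since $j\ge m+1$), and shows that if both $v_1$ and $v_2$ had two blue neighbours (or distinct blue neighbours) in $Y$ one could augment $M$ to a blue $nK_2$; hence some endpoint $w$ has at most one blue neighbour in $Y$ and completes a red $K_m$ with the red $K_{(m-1)\times 2}$. This alternating step is the heart of the proof and is absent from your proposal; without it (or some substitute) the argument only produces a red $K_{m-1}$. The boundary cases $j=m$ and $j=m+1$, where no matched edge avoids the $m-1$ surviving parts, also need separate treatment, as the paper does via its Theorems on $m_j(K_j,nK_2)$ and $m_j(K_{j-1},nK_2)$.
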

All the graphs  $G$ considered in this paper are undirected, simple, and finite graphs.
The order of a graph $G$ is defined by $|V(G)|$, which $V(G)$ denote the set of vertices of $G$. A $n$ stripe of a graph $G$ is defined as a set of $n$ edges without  common vertex.
For a vertex $v\in V(G)$, the degree and neighbours of $v$ are denoted by $\deg_G{(v)}$ and $N_G(v)$ , respectively. The neighbourhood of a vertex $v\in V(G) \cap X_j$ is defined by $N_{X_j}(v)=\lbrace u \in V(X_j) \vert uv \in E(G) \rbrace$. A cycle   on $n$ vertices is denoted by $C_n$. We use $[X_i,X_j]$ to denote the set of edges between partite sets $X_i$ and $X_j$. The complement of a graph $G$ is denoted by ${\overline G}$. The join of two graphs $G$ and $H$, defined by $G\oplus H$, is a graph obtained from $G$ and $H$ by joining each vertex of $G$ to all vertices of $H$.  The union of two graphs $G$ and $H$, define by $G\cup H$, is a graph obtain from $G$ and $H$, where $V(G\cup H) =V(G)\cup V(H)$ and $E(G\cup H) =E(G)\cup E(H)$. For convenience, we use $[n]$  instead of $\{1,2,\ldots,n\}$. $G$ is $n$-colorable to $(G_1, G_2,\ldots, G_n)$ if there exist a $n$-edge decomposition of $G$, say $(G^1, G^2,\ldots, G^n)$ where $G_i\nsubseteq G^i$ for each $i=1,2, \ldots,n.$


\section{Proof of Theorem \ref{Mth}}

\subsection{Some results in term of the M-M-numbers related to stripes versus $K_n$.}
In this section, we prove some results in term of the size  Multipartite Ramsey numbers related to stripes versus $K_n$. We begin with the following theorem:
\begin{theorem}\label{t1}\cite{jayawardene2016size} If $j \geq 2$, then:

	\[m_j(K_3, nK_2)=  \left\lbrace
\begin{array}{ll}
	
\infty& ~~~~~~~~~ ~if ~j=2~~, ~~\vspace{.2 cm}\\
	\lceil\frac{2n}{j-1}\rceil& ~~~~~~ otherwise.~\vspace{.2 cm}\\
	
\end{array}
\right.
\]	
	
\end{theorem}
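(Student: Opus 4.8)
Throughout I will call the first (the $K_3$-) color \emph{red} and write $G^1$ for the red graph, and call the second (the $nK_2$-) color \emph{blue} and write $G^2$ for the blue graph. With this language $K_{j\times t}$ is $(K_3,nK_2)$-colorable precisely when its edges split into a triangle-free red graph $G^1$ together with a blue graph $G^2$ whose maximum matching has size at most $n-1$, i.e. $\nu(G^2)\le n-1$; and $m_j(K_3,nK_2)$ is the least $t$ for which no such split exists. The plan is to settle $j=2$ directly and then prove two matching inequalities for $j\ge 3$. For $j=2$ the graph $K_{2\times t}=K_{t,t}$ is bipartite, hence triangle-free, so coloring \emph{every} edge red leaves $G^1=K_{t,t}$ with no $K_3$ and $G^2$ empty with no edge at all. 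This avoids both monochromatic targets for every $t$, so no finite $t$ works and $m_2(K_3,nK_2)=\infty$.

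For the lower bound when $j\ge 3$, fix $t=\lceil 2n/(j-1)\rceil-1$, so that $(j-1)t\le 2n-1$. I would color red exactly the edges incident with one fixed part $X_1$ (that is, all of $[X_1,X_2\cup\cdots\cup X_j]$) and color everything else blue. Then the red graph is bipartite between $X_1$ and the remaining vertices, hence triangle-free, while the blue graph is the complete multipartite graph $K_{(j-1)\times t}$ on $X_2,\dots,X_j$. Since a complete multipartite graph on $r\ge 2$ equal parts of size $t$ has a near-perfect matching (no part exceeds half of the $rt$ vertices), its matching number is $\lfloor rt/2\rfloor$; with $r=j-1$ this gives $\nu(G^2)=\lfloor (j-1)t/2\rfloor\le\lfloor(2n-1)/2\rfloor=n-1$. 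Thus $K_{j\times t}$ is $(K_3,nK_2)$-colorable and $m_j(K_3,nK_2)\ge\lceil 2n/(j-1)\rceil$.

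For the upper bound it suffices to prove the key estimate that whenever $G^1$ is triangle-free one has $\nu(G^2)\ge\lceil (j-1)t/2\rceil$: applied at $t=\lceil 2n/(j-1)\rceil$ this gives $(j-1)t\ge 2n$, hence $\nu(G^2)\ge n$ and a blue $nK_2$, so $m_j(K_3,nK_2)\le\lceil 2n/(j-1)\rceil$ and the two bounds coincide. To prove the estimate I would take a maximum blue matching $M$, let $U$ be its set of uncovered vertices, note $\nu(G^2)=(jt-|U|)/2$, and reduce the claim to showing $|U|\le t$. Since $U$ is uncovered, no blue edge joins two of its vertices, so every edge of $K_{j\times t}$ inside $U$ is red; were $U$ to meet three distinct parts these edges would form a red triangle, so $U$ lies in at most two parts, and if it lies in a single part then $|U|\le t$ immediately.

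The main obstacle is to exclude the case in which $U$ spreads across two parts $X_1,X_2$ with $|U|>t$. Writing $A=U\cap X_1$ and $B=U\cap X_2$, triangle-freeness forces each outside vertex $w$ to be blue-complete to all of $A$ or all of $B$ (otherwise $w$ together with one vertex of $A$ and one of $B$, which are joined by a red edge, yields a red triangle). These numerous blue edges running from matched vertices into the uncovered set $U$ generate $M$-alternating paths between uncovered vertices, and the crux of the argument is the augmenting-path/domination bookkeeping showing that, once $|U|>t$, such a path can always be closed into an augmenting path — contradicting the maximality of $M$. Carrying this out forces $|U|\le t$, hence $\nu(G^2)\ge\lceil (j-1)t/2\rceil$, and combining with the construction above gives $m_j(K_3,nK_2)=\lceil 2n/(j-1)\rceil$ for all $j\ge 3$.
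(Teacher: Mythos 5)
Note first that the paper does not prove this statement at all: Theorem~\ref{t1} is quoted from \cite{jayawardene2016size} as a known result, so your attempt can only be judged on its own merits. Your $j=2$ case and your lower-bound construction are correct (and the construction matches the style used elsewhere in the paper, e.g.\ Theorem~\ref{t5}). The upper bound, however, rests on a false key estimate. You claim that triangle-freeness of $G^1$ forces $\nu(G^2)\geq\lceil (j-1)t/2\rceil$, equivalently that the set $U$ of vertices uncovered by a maximum blue matching satisfies $|U|\leq t$. This fails whenever $(j-1)t$ is odd, and the counterexample is precisely your own lower-bound coloring: color red all edges incident with $X_1$ and blue the complete multipartite graph $K_{(j-1)\times t}$ on the remaining parts. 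For $j=4$, $t=1$ (red star in $K_4$, blue triangle) one gets $\nu(G^2)=1<2=\lceil 3/2\rceil$ and $|U|=2>t$; for $j=4$, $t=3$ one gets $\nu(G^2)=4=\lfloor 9/2\rfloor$ and $U=X_1$ plus one leftover vertex, so $|U|=t+1$ spread over exactly two parts. Thus the configuration you defer to the end --- ``$U$ in two parts with $|U|>t$'' --- genuinely occurs with $G^1$ triangle-free and $M$ maximum, and no amount of augmenting-path bookkeeping can exclude it: your plan is to prove a false statement.

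What is true, and what the theorem actually needs, is the floor version $\nu(G^2)\geq\lfloor (j-1)t/2\rfloor$, equivalently $|U|\leq t+\bigl((j-1)t \bmod 2\bigr)$; at $t=\lceil 2n/(j-1)\rceil$ one has $(j-1)t\geq 2n$, so the floor already gives $\nu(G^2)\geq n$. But your proposal contains no proof of even this corrected estimate: the entire two-part analysis, which is the real combinatorial content of the theorem (the analogue of the paper's Claims~\ref{c3} and \ref{c7} in its $K_4$ and $K_5$ arguments), is replaced by an unexecuted appeal to ``augmenting-path/domination bookkeeping.'' There is also a local slip: the dichotomy ``every outside vertex is blue-complete to $A$ or to $B$'' holds only for vertices outside $X_1\cup X_2$; vertices of $X_1\setminus A$ and $X_2\setminus B$ have no edges to their own part and escape the triangle argument, yet they must be handled in any completion. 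In summary: lower bound correct, upper bound a genuine gap, and the intermediate claim on which your reduction is built is false as stated by a parity error.
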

 As any $m-1$-bipartite  graph is $K_m$-free, we get the size of  M-R-number $m_j(K_m, n_1K_2,\ldots,n_iK_2)$, for each $ j\leq m-1, i, n_i\geq 1$ in the next theorem.
\begin{theorem}\label{t2} For each positive integers  $j,i, n_i$, where $2\leq j\leq m-1$ and $ i, n_i\geq 1$, we have the following:
	\[m_j(K_m, n_1K_2,n_2K_,\ldots,n_iK_2)= \infty.\]
	
\end{theorem}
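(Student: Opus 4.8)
The plan is to show that for \emph{every} positive integer $t$, the graph $K_{j\times t}$ is $(i+1)$-colorable to $(K_m, n_1K_2, n_2K_2,\ldots, n_iK_2)$; by the very definition of the M-R-number, the existence of such a coloring for arbitrarily large $t$ forces $m_j(K_m, n_1K_2,\ldots,n_iK_2)=\infty$. So the entire task reduces to producing one valid coloring for each $t$.

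The single observation driving the argument is that a complete $j$-partite graph with $j\leq m-1$ is $K_m$-free. Indeed, a copy of $K_m$ requires $m$ pairwise adjacent vertices; but $K_{j\times t}$ has only $j\leq m-1$ partite sets, so by the pigeonhole principle any choice of $m$ vertices must place two of them in a common part, and two vertices in the same part are non-adjacent in $K_{j\times t}$. Hence no $K_m$ can occur in $K_{j\times t}$ at all.

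With this in hand I would exhibit the required decomposition explicitly: assign every edge of $K_{j\times t}$ the first color (the one responsible for $K_m$), and leave the remaining $i$ color classes empty. The first color class is then exactly $K_{j\times t}$, which by the observation above contains no $K_m$. Each of the other $i$ color classes is edgeless; since $n_\ell\geq 1$ for every $\ell$, an edgeless graph cannot contain a matching $n_\ell K_2$, as it has no edge whatsoever, let alone $n_\ell\geq 1$ independent edges. Thus this coloring simultaneously avoids every target, i.e.\ $K_{j\times t}$ is $(i+1)$-colorable to $(K_m, n_1K_2,\ldots,n_iK_2)$.

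Because such a valid coloring exists for arbitrarily large $t$, no finite value of $t$ can force a monochromatic target subgraph, and therefore $m_j(K_m, n_1K_2,\ldots,n_iK_2)=\infty$. I anticipate no genuine obstacle here: the whole content is the pigeonhole remark that $j\leq m-1$ partite sets cannot accommodate $K_m$, together with the trivial fact that empty color classes contain no positive-size stripe. The only point to state carefully is the hypothesis $n_\ell\geq 1$, which guarantees each auxiliary target is nonempty and hence unattainable by an edgeless class.
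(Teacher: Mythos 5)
Your proof is correct and follows exactly the paper's (one-line) justification: since $j\leq m-1$ partite sets cannot accommodate a $K_m$, placing all edges in the first color class and leaving the stripe classes empty yields a valid coloring of $K_{j\times t}$ for every $t$, forcing the value $\infty$. No issues.
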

 
Suppose that $j\geq 4$  be  positive integer, in the next two results  we get the exact value of  M-R-number $m_j(K_j, nK_2)$, for each $j\geq4$ and $n\geq 1$.
\begin{lemma}\label{l1}
	For each positive integer $n\leq 2$:
	\[m_j(K_j, nK_2)=n.\]
\end{lemma}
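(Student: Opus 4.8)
The plan is to verify the two cases $n=1$ and $n=2$ directly, matching an upper and a lower bound in each. Two observations drive everything: first, $K_{j\times 1}$ is exactly the complete graph $K_j$; second, a copy of $K_j$ sitting inside $K_{j\times t}$ must use exactly one vertex from each of the $j$ parts (a transversal), because two vertices in a common part are never adjacent. Thus, to exhibit a $K_j$ in the first color it suffices to find a transversal all of whose $\binom{j}{2}$ joining edges receive color $1$. (This also matches Theorem \ref{Mth}: setting $m=j$ in $\lceil 2n/(j+2-m)\rceil$ gives $\lceil 2n/2\rceil=n$, so this lemma is the small-$n$ base of that specialization.)

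For the lower bound I would display, for each $n\le 2$, a $2$-coloring of $K_{j\times(n-1)}$ avoiding both targets. When $n=1$ the graph $K_{j\times 0}$ has no vertices and hence contains neither $K_j$ nor an edge, so $m_j(K_j,1K_2)\ge 1$. When $n=2$ I take $K_{j\times 1}=K_j$, color one edge with the second color and all others with the first; then the first color is $K_j$ minus an edge (no $K_j$) and the second color is a single edge (no $2K_2$), so $m_j(K_j,2K_2)\ge 2$.

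For the upper bound, fix an arbitrary $2$-coloring $(G^1,G^2)$ of $K_{j\times n}$ with $n\le 2$, assume $G^2$ contains no $nK_2$, and produce a $K_j$ in $G^1$. For $n=1$ the hypothesis forces $G^2$ to be edgeless, so $G^1=K_j$ and we are done. For $n=2$ the hypothesis says the color-$2$ subgraph has matching number at most $1$, i.e.\ every two of its edges share a vertex; if it is edgeless the claim is immediate, so assume it has an edge. The structural fact I would invoke is that an edge set in which every two edges intersect is either a star or a triangle. In the star case all color-$2$ edges pass through one center $v$, so the transversal that omits $v$—take the second vertex of $v$'s part and any vertex of each remaining part—spans only color-$1$ edges and gives a $K_j$ in $G^1$. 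In the triangle case the three color-$2$ vertices lie in three distinct parts (no edges exist inside a part), and omitting all three, again possible since every part has a spare vertex, yields a transversal whose induced edges are all color $1$.

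The only genuine content is the $n=2$ upper bound, and within it the star-versus-triangle dichotomy for intersecting edge sets together with the observation that each part of $K_{j\times 2}$ carries a second vertex around which to route the transversal. I expect no serious obstacle: once the color-$2$ support is confined either to a single center or to three vertices in distinct parts, the transversal that avoids that support is forced to consist entirely of color-$1$ edges, and combining this with the lower-bound colorings pins the value at exactly $n$.
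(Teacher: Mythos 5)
Your proof is correct and follows essentially the same route as the paper: a direct verification of the two cases $n=1,2$, with an explicit extremal coloring for the lower bound and an argument in $K_{j\times 2}$ showing any $2K_2$-free second color class can be routed around by a transversal for the upper bound. Your only divergences are that you use a single color-$2$ edge where the paper uses a triangle ($K_3\cup(K_{j-3}\oplus 3K_1)$) for the $n=2$ lower bound, and that your star-or-triangle dichotomy for graphs of matching number one supplies the detail the paper leaves as ``it is easy to say''.
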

\begin{proof}
For $n=1$ it is clear that $m_j(K_j, K_2)=1$, hence assume that $n=2$. By considering $K_j=K_3\cup (K_{j-3} \oplus 3K_1)$ it can be check that $m_j(K_j, 2K_2)\geq 2$. As $n=2$ and any part of $K=K_{j\times 2}$ has two vertices, it is easy to say that in any two coloring of the edges of $K$ say $(G^1, G^2)$, either $K_j\subseteq G^1$ or $2K_2\subseteq G^2$. Hence $m_j(K_j, nK_2)=n$	for each $j\geq 2$ and $n=1,2$.
\end{proof}
\begin{theorem}\label{t3}
	For each positive integers  $j\geq 2$ and $ n\geq 3$:
	\[m_j(K_j, nK_2)=n.\]
\end{theorem}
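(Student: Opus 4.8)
The plan is to establish the two inequalities $m_j(K_j,nK_2)\ge n$ and $m_j(K_j,nK_2)\le n$ separately, and the whole argument should be elementary once the right maximum-matching observation is isolated. For the lower bound I would produce a $2$-coloring of $K_{j\times(n-1)}$ that avoids both targets: color every edge incident to the part $X_1$ (that is, all edges in $\bigcup_{i\ge 2}[X_1,X_i]$) with the second color, and color everything else with the first. Then $G^1$ is exactly $K_{(j-1)\times(n-1)}$ on $X_2\cup\cdots\cup X_j$, whose largest clique has order $j-1<j$, so $G^1$ is $K_j$-free; and $G^2$ is the complete bipartite graph joining the $n-1$ vertices of $X_1$ to the remaining $(j-1)(n-1)$ vertices, whose maximum matching has size $\min\{n-1,(j-1)(n-1)\}=n-1<n$ for $j\ge2$, so $G^2$ contains no $nK_2$. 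This witnesses $m_j(K_j,nK_2)>n-1$.

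For the upper bound I would take $t=n$ and an arbitrary $2$-coloring $(G^1,G^2)$ of $K_{j\times n}$, and show that if $G^2$ has no $nK_2$ then $G^1$ must contain $K_j$. So assume every matching of $G^2$ has at most $n-1$ edges, and fix a maximum matching $M$; it covers at most $2(n-1)<2n$ vertices. The crux of the argument is the claim that \emph{no} part $X_i$ can be entirely covered by $M$. Since each $X_i$ is an independent set of $K_{j\times n}$, if all $n$ of its vertices were matched then each would be joined by a distinct edge of $M$ to a vertex outside $X_i$, forcing at least $n$ covered vertices inside $X_i$ and at least $n$ covered vertices outside it, hence at least $2n$ covered vertices in total, contradicting $|V(M)|\le 2n-2$.

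Consequently every part contains a vertex left uncovered by $M$, and because $M$ is maximum (indeed maximal) the uncovered vertices are pairwise non-adjacent in $G^2$. Picking one uncovered vertex $v_i$ from each part then yields a transversal $\{v_1,\dots,v_j\}$ that is independent in $G^2$; as the $v_i$ lie in distinct parts, every pair $v_iv_\ell$ is an edge of $K_{j\times n}$, and being absent from $G^2$ it lies in $G^1$, so $\{v_1,\dots,v_j\}$ spans a $K_j$ in $G^1$. Combining this with the construction above gives $m_j(K_j,nK_2)=n$.

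I expect the only delicate point to be the saturation claim in the second paragraph: recognizing that a matching of size at most $n-1$ cannot cover any single part, precisely because a part is an $n$-vertex independent set whose saturation would demand $2n$ covered vertices. Everything else, including the passage from an independent transversal of $G^2$ to a monochromatic $K_j$ in $G^1$, then follows immediately, so the main work is in framing that counting observation cleanly rather than in any lengthy computation.
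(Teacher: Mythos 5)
Your proposal is correct, and the lower-bound construction is exactly the one in the paper (color the star of edges meeting $X_1$ with the second color, leaving $G^1\cong K_{(j-1)\times(n-1)}$ and $G^2\cong K_{n-1,(j-1)(n-1)}$). Where you genuinely diverge is in the upper bound. The paper argues by induction on $n$: assuming a good coloring of $K_{j\times n}$, it picks an edge $e$ of $G^2$, deletes one vertex from each part (including both endpoints of $e$), invokes the inductive hypothesis $m_j(K_j,(n-1)K_2)\le n-1$ on the resulting $K_{j\times(n-1)}$ to extract an $(n-1)K_2$ in $G^2$ disjoint from $e$, and appends $e$ to reach a contradiction; this requires the base cases $n=1,2$ from Lemma \ref{l1}. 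Your argument is direct and non-inductive: a maximum matching $M$ of $G^2$ with $|M|\le n-1$ cannot saturate any part, since a part is an independent $n$-set whose saturation would force $n$ distinct edges of $M$ and hence $2n$ covered vertices, exceeding $2(n-1)$; the uncovered vertices form an independent set in $G^2$ by maximality of $M$, and an independent transversal of $G^2$ is precisely a $K_j$ in $G^1$. Your route buys a self-contained proof that needs no base cases, handles $n=1,2$ uniformly, and explicitly exhibits the monochromatic $K_j$ rather than deriving a contradiction about matching size; the paper's induction scheme, on the other hand, is the template it reuses for the harder theorems later (e.g.\ Theorems \ref{t4} and \ref{t6}), where no such clean saturation argument is available. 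Both proofs are valid.
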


\begin{proof}
For $n=1,2$, the proof is complete by Lemma \ref{l1}, hence assume that $n\geq 3$.  Let $K=K_{j\times(n-1)}$ and consider $2$-edge-coloring $(G^1,G^2)$ of  $K$, where $G^1\cong K_{(j-1)\times (n-1)}$ and   $\overline{G^1}=G^2$. It is easy to say that $G^2$ is a $2$-partite graph with $n-1$ vertices in the first part  and $(j-1)(n-1)$ vertices in the second part. That is  $G^2\cong K_{n-1, (j-1)(n-1)}$. By definition $G^i$, it can be check that $K_j\nsubseteq G^1$ and $nK_2\nsubseteq G^2$, that is $K$ is 2-colorable to $(K_j,nK_2)$, which means that $m_j(K_j, nK_2)\geq n$ for each $j\geq 2$ and each $n\geq 1$. 

 We prove $m_j(K_j, nK_2)\leq n$ by induction on $n$. For $n=1,2$ theorem holds by Lemma \ref{l1}. Assume that for each, $n'\leq n-1$, the theorem holds. By contrary suppose that  $K=K_{j\times n}$  with partite sets $X_i=\{x_1^i,x_2^i,\ldots,x_{n}^i\}$ for each $i\in[j]$ is $2$-colorable to $(K_j,nK_2)$, that is there exist 2-edge-coloring $(G^1,G^2)$ of  $K_{j\times n}$, where $K_j\nsubseteq G^1$ and $nK_2\nsubseteq G^2$. Without loss of generality (w.l.g) assume that $e=x_1^1x_1^2\in E(G^2)$.  Now, set $X'_i=X_i\setminus \{x_1^i\}$ for each $i\in[j]$,  and consider   $K'=K_{j\times (n-1)}=G[X'_1,\ldots,X'_j]$. Consider a 2-edge-coloring $(G'^1,G'^2)$ of  $K'$. As $m_j(K_j, (n-1)K_2)\leq n-1$,   $K_j\nsubseteq G'^1\subseteq G^1$,  one can check that $(n-1)K_2\subseteq G'^2\subseteq G^2$. Suppose that $M'$ be a M-M in $G'^2$. Therefore, $M=M'\cup \{e\}$ be a matching of size $n$ in $G^2$, a contradiction. Which means that $m_j(K_j, nK_2)\leq n$,  for each $n\geq 2$. Therefore $m_j(K_j, nK_2)=n$ for each $j\geq 2$ and each $n\geq 1$, and  the proof is complete.
\end{proof}
Suppose that $j\geq 2$, it is clear that $m_j(K_{j-1}, K_2)=1$. Also by considering $K_j=(K_{j-3}\oplus 3K_1 )\cup K_3$ it can be check that $m_j(K_{j-1}, 2K_2)\geq 2$ and  as $K=K_{j\times 2}$ has two disjoint copy of $K_j$ it is easy to say that in any two coloring of the edges of $K$ say $(G^1, G^2)$, either $K_{j-1}\subseteq G^1$ or $2K_2\subseteq G^2$. Hence $m_j(K_{j-1}, 2K_2)=2$. Now assume that $n\geq 3$, in the following results  we get the exact value of  M-R-number $m_j(K_{j-1}, nK_2)$, for each $j\geq 2$ and each $n\geq 3$. We begin with the following theorem:
\begin{theorem}\label{t4}
	Suppose that  $n\in \{3,4,5\}$, then:
	\[m_j(K_{j-1}, nK_2)=n-1. \]
 
\end{theorem}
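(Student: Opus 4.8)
The plan is to prove the two inequalities $m_j(K_{j-1},nK_2)\ge n-1$ and $m_j(K_{j-1},nK_2)\le n-1$ separately (throughout I take $j\ge 3$, the statement being degenerate for $j=2$). For the lower bound I would exhibit a $2$-edge-coloring of $K=K_{j\times(n-2)}$ witnessing that it is $2$-colorable to $(K_{j-1},nK_2)$. Group the $j$ partite sets into one block $W=X_1\cup X_2\cup X_3$ made of three parts together with the $j-3$ singleton blocks $X_4,\dots,X_j$, let $G^2$ consist of all edges of $K$ lying inside $W$ (so $G^2\cong K_{3\times(n-2)}$), and let $G^1=\overline{G^2}$. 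Then $G^1$ is a complete $(j-2)$-partite graph with blocks $W,X_4,\dots,X_j$, so its largest clique has $j-2$ vertices and $K_{j-1}\nsubseteq G^1$. A maximum matching of $G^2\cong K_{3\times(n-2)}$ has size $\lfloor 3(n-2)/2\rfloor$, and the \emph{only} place the hypothesis $n\in\{3,4,5\}$ is used is the inequality $\lfloor 3(n-2)/2\rfloor\le n-1$, which holds for $n\le 5$ and fails for $n\ge 6$. Hence $nK_2\nsubseteq G^2$, giving $m_j(K_{j-1},nK_2)\ge n-1$.

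For the upper bound I would show that every $2$-edge-coloring $(G^1,G^2)$ of $K_{j\times(n-1)}$ with $K_{j-1}\nsubseteq G^1$ has $nK_2\subseteq G^2$ (this part works for all $n\ge 3$; it is the lower bound that forces $n\le 5$). Let $M$ be a maximum matching of $G^2$ and let $U$ be the set of vertices uncovered by $M$. By maximality $U$ is independent in $G^2$, so all edges of $K$ inside $U$ lie in $G^1$ and $G^1[U]$ is complete multipartite. If $U$ met $j-1$ distinct parts, picking one vertex from each would yield $K_{j-1}\subseteq G^1$, a contradiction; hence $U$ meets at most $j-2$ parts and $|U|\le(j-2)(n-1)$. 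Since $|V(K)|=j(n-1)$, this gives $2|M|=j(n-1)-|U|\ge 2(n-1)$, i.e. $|M|\ge n-1$.

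It remains to rule out $|M|=n-1$, which I expect to be the \emph{main obstacle}, since it first requires pinning down the rigid structure forced by equality and then exploiting it. When $|M|=n-1$ we have $|U|=(j-2)(n-1)$, so $U$ is exactly the union of $j-2$ full parts; the two remaining parts $X_a,X_b$ are entirely covered, and as $K$ has no edges inside a part, $M$ is a perfect matching between $X_a$ and $X_b$. Fix $v\in X_a$: if $v$ had a $G^1$-neighbour in each of the $j-2$ parts inside $U$, those neighbours together with $v$ would span $K_{j-1}$ in $G^1$, so $v$ must be $G^2$-adjacent to \emph{every} vertex of some part $X_p\subseteq U$, and likewise for each vertex of $X_a\cup X_b$. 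Now take $vv'\in M$ with $v\in X_a$, $v'\in X_b$, and let $X_p,X_q\subseteq U$ be parts to which $v$, resp.\ $v'$, are $G^2$-complete; if $p\ne q$ pick $u\in X_p$, $w\in X_q$, and if $p=q$ use $|X_p|=n-1\ge 2$ to pick distinct $u,w\in X_p$. In either case $u,w$ are uncovered and $vu,v'w\in G^2$, so $(M\setminus\{vv'\})\cup\{vu,v'w\}$ is a matching of size $n$, contradicting maximality. Thus $|M|\ge n$, i.e. $nK_2\subseteq G^2$, which yields $m_j(K_{j-1},nK_2)\le n-1$ and, with the lower bound, the equality $m_j(K_{j-1},nK_2)=n-1$. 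Note the hypothesis $n\ge 3$ is exactly what guarantees $|X_p|\ge 2$ in the augmentation when $p=q$.
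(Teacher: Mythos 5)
Your proposal is correct. The lower bound is the same construction as the paper's ($G^2\cong K_{3\times (n-2)}$ on three parts, $G^1$ its complement, with $n\le 5$ entering only through $\lfloor 3(n-2)/2\rfloor\le n-1$, equivalently $3(n-2)\le 2n-1$). The upper bound, however, is argued quite differently. The paper picks an edge $e\in E(G^2)$ (which must exist, else $K_j\subseteq G^1$), deletes one vertex from each of the two parts containing its endpoints, locates a copy of $K_{(j-1)\times(n-1)}$ in what remains, and invokes Theorem~\ref{t3} ($m_{j-1}(K_{j-1},(n-1)K_2)=n-1$) to extract an $(n-1)K_2$ in $G^2$ disjoint from $e$; it does this explicitly only for $n=3$ and asserts $n=4,5$ are "the same." You instead give a self-contained maximum-matching argument: the uncovered set $U$ is $G^2$-independent, hence meets at most $j-2$ parts (else $K_{j-1}\subseteq G^1$), which already forces $|M|\ge n-1$; equality pins $M$ down as a perfect matching between two parts, each covered vertex must be $G^2$-complete to some full part inside $U$ (else it extends a $(j-2)$-clique of $G^1[U]$ to $K_{j-1}$), and an explicit augmentation of one matching edge then yields $nK_2$, using $n\ge 3$ only to guarantee two distinct uncovered vertices in a single part. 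Your route avoids any appeal to Theorem~\ref{t3}, treats $n=3,4,5$ uniformly, and in fact establishes $m_j(K_{j-1},nK_2)\le n-1$ for all $n\ge 3$ (consistent with $\lceil 2n/3\rceil\le n-1$), while the paper's route is shorter on paper because it reuses earlier machinery but leaves the $n=4,5$ embeddings implicit.
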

\begin{proof}
 Consider a $2$-edge-coloring $(G^1,G^2)$ of  $K=K_{j\times (n-2)}$, where $n\in[j]$, $G^2\cong K_{3\times (n-2)}$ and   $\overline{G^2}=G^1$. It is easy to say that $G^1$ is a $(j-2)$-partite graph with $n-2$ vertices in $(j-3)$ parts  and $3(n-2)$ vertices in one parts. That is  $G^1\cong K_{n-2,\ldots, n-2, 3(n-2)}$. By definition $G^1$, one can check that $K_{j-1}\nsubseteq G^1$, also as $|V(G^2)|=3(n-2)\leq 2n-1$ for each $n\in \{3,4,5\}$, hence it can be say  that  $nK_2\nsubseteq G^2$, that is $K$ is 2-colorable to $(K_{j-1},nK_2)$, which means that $m_j(K_{j-1}, nK_2)\geq n-1$ for each $n\in \{3,4,5\}$. 

Now for each $n\in \{3,4,5\}$, consider  $K=K_{j\times (n-1)}$  with partite sets $X_i=\{x_1^i,x_2^i,\ldots,x_{n-1}^i\}$ for each $i\in[j]$. Suppose that $(G^1,G^2)$ be a 2-edge-coloring of  $K$, where $K_j\nsubseteq G^1$. Assume that $n=3$ and w.l.g let that $e=x_1^1x_2^1\in E(G^2)$.  Now, set $X'_i=X_i\setminus \{x_1^i\}$ for $i=1,2$ and $X'_i=X_i$ for each $i\geq 3$,  and consider   $K'=G[X'_1,\ldots,X'_j]$. It can be check that $K'\cong K_2\oplus K_{(j-2)\times 2}$, therefore $K_{(j-1)\times 2}\subseteq K'$. Consider a 2-edge-coloring $(G'^1,G'^2)$ of  $K'$. As by Theorem \ref{t3} $m_{j-1}(K_{j-1}, 2K_2)= 2$,  $K_{{j-1}\times 2}\subseteq K'$, and $K_{j-1}\nsubseteq G'^1\subseteq G^1$,  one can check that $2K_2\subseteq G'^2\subseteq G^2$. Suppose that $M'$ be a M-M in $G'^2$. Therefore, $M=M'\cup \{e\}$ is a matching of size $3$ in $G^2$, which means that $m_j(K_{j-1}, 3K_2)=2$.  Since by Theorem \ref{t3} $m_j(K_j, nK_2)= n$ for each $n\geq 3$, then for $n=4,5$ by the proof is same as the case that $n=3$, we have $m_j(K_{j-1}, nK_2)=n-1$, so the proof is complete.
\end{proof}
 It is clear that    $m_j(K_4, K_2)=1$ for each $j\geq 4$ also it is clear that $m_j(K_4, 2K_2)=1$ for each $j\geq 6$. In the following results  we get a general lower bounds of  M-R-number $m_j(K_m, nK_2)$, for each $j\geq m-2$ and  each $n\geq 3$.
\begin{theorem}\label{t5}
	Suppose that  $j\geq m+1, m\geq 4$ and $n\geq 3$, then:
	\[m_j(K_{m}, nK_2)\geq \lceil \frac{2n}{j+2-m}\rceil .\]
	
\end{theorem}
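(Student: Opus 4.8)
The plan is to prove the lower bound by exhibiting, for $t=\lceil 2n/(j+2-m)\rceil-1$, an explicit $2$-edge-coloring of $K_{j\times t}$ that contains neither a red $K_m$ nor a blue $nK_2$. This shows $K_{j\times t}$ is $2$-colorable to $(K_m,nK_2)$, whence $m_j(K_m,nK_2)\geq t+1=\lceil 2n/(j+2-m)\rceil$. Throughout write $j'=j+2-m$, so that $j'\geq 3$ by the hypothesis $j\geq m+1$, and note $t=\lceil 2n/j'\rceil-1$.

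For the construction, I would partition the $j$ partite sets $X_1,\ldots,X_j$ into $m-1$ groups: the $m-2$ singletons $\{X_1\},\ldots,\{X_{m-2}\}$, together with one large group $Y=X_{m-1}\cup\cdots\cup X_j$ made up of the remaining $j'=j-m+2$ parts. Color an edge of $K_{j\times t}$ red (place it in $G^1$) when its endpoints lie in different groups, and blue (place it in $G^2$) when both endpoints lie in $Y$ but in distinct original parts. Since within any group two vertices of the same original part are never adjacent in $K_{j\times t}$, the pair $(G^1,G^2)$ is a genuine $2$-edge-decomposition of $K_{j\times t}$.

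Two verifications then finish the argument. First, $G^1$ is the complete $(m-1)$-partite graph on these groups, so every clique meets each group in at most one vertex and thus has at most $m-1<m$ vertices; hence $K_m\nsubseteq G^1$. Second, $G^2$ is supported entirely on $Y$ and is isomorphic to $K_{j'\times t}$, whose matching number for $j'\geq 2$ equals $\lfloor j't/2\rfloor$ (the balanced complete multipartite graph $K_{j'\times t}$ has $N-\max_i n_i=(j'-1)t\geq\lfloor N/2\rfloor$, so $\nu=\lfloor N/2\rfloor$). From $t=\lceil 2n/j'\rceil-1<2n/j'$ and integrality, $j't$ is an integer strictly below $2n$, so $j't\leq 2n-1$ and therefore $\nu(G^2)=\lfloor j't/2\rfloor\leq n-1<n$, giving $nK_2\nsubseteq G^2$.

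The key point to get right is the choice of grouping: concentrating the surplus parts into a single large blue group $Y$, while making the other $m-2$ groups singletons that contribute no blue edges at all, is precisely what minimizes the blue matching number for a fixed $t$ and makes the bound tight. Once this is in place, the only computation needed is the short inequality $j'(\lceil 2n/j'\rceil-1)\leq 2n-1$, immediate from $\lceil x\rceil-1<x$, after which the standard matching-number formula for a balanced complete multipartite graph does the rest. I would also dispose of the degenerate case $t=0$ (which arises when $j'$ is large relative to $n$) by observing that the bound $m_j(K_m,nK_2)\geq 1$ is then trivial.
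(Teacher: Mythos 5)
Your proposal is correct and is essentially the paper's own construction: both take $t=\lceil 2n/(j+2-m)\rceil-1$, make $G^2$ the complete $(j+2-m)$-partite graph on the surplus parts and $G^1$ its complement (the complete $(m-1)$-partite graph $K_{t,\ldots,t,(j+2-m)t}$), and conclude $K_m\nsubseteq G^1$ by $(m-1)$-partiteness and $nK_2\nsubseteq G^2$ from $(j+2-m)t\leq 2n-1$. The only cosmetic difference is that you invoke the exact matching number $\lfloor j't/2\rfloor$ of $K_{j'\times t}$, where the paper simply observes that $G^2$ has fewer than $2n$ vertices, which already rules out an $n$-matching.
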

\begin{proof}
	Consider a $2$-edge-coloring $(G^1,G^2)$ of  $K=K_{j\times (t-1)}$, where $n\geq 3$, $t=\lceil \frac{2n}{j+2-m}\rceil$, $G^2\cong K_{(j+2-m)\times (t-1)}$ and   $\overline{G^2}=G^1$. It is easy to say that $G^1$ is a $m-1$-partite graph with $t-1$ vertices in $(m-2)$ parts  and $(j+2-m)(t-1)$ vertices in the last parts. That is  $G^1\cong K_{t-1, \ldots,t-1, (j+2-m)(t-1)}$. By definition $G^1$, one can check that $K_m\nsubseteq G^1$, also as $|V(G^2)|=(j+2-m)(t-1)$ and $t=\lceil \frac{2n}{j+2-m}\rceil$, then one can say that $ |V(G^2)|\leq 2n-1$ for each $n\geq 3$ and each $j\geq m+1$, hence $nK_2\nsubseteq G^2$, that is $K$ is 2-colorable to $(K_{m}, nK_2)$, which means that $m_j(K_{m}, nK_2)\geq t$ for each $j\geq m+1, m\geq 4$ and each  $n\geq 3$. 
\end{proof}
\subsection{Size  Multipartite Ramsey numbers related to stripes versus $K_4$}
In this section, we obtain the values of M-R-number  $m_j(K_4, nK_2)$ for each  $j\geq 2$ and $n\geq 1$. By Theorem \ref{t2} we have $m_j(K_4, nK_2)=\infty$ for $j=2,3$, also by Lemma \ref{l1} and Theorem \ref{t3} we have  $m_4(K_4, nK_2)=n$ for  each $n$, and by Theorem \ref{t4} we have  $m_5(K_4, nK_2)=n-1$ for each $n\in \{3,4,5\}$. It can be check that  $m_j(K_4, K_2)=1$ for each $j\geq 4$ also it is clear that $m_j(K_4, 2K_2)=1$ for each $j\geq 6$ and  $m_5(K_4, 2K_2)=2$. Hence assume that $j\geq 5$ and $n\geq 3$. In the following results  we get the size of  M-R-number $m_5(K_4, nK_2)$, for each $n\geq 3$.

\begin{theorem} \label{t6}
	Suppose that  $n\geq 3$, then:
	\[m_5(K_4, nK_2)= \lceil \frac{2n}{3}\rceil .\]
	
\end{theorem}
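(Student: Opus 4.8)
The plan is to obtain the two inequalities separately. The lower bound $m_5(K_4,nK_2)\ge\lceil 2n/3\rceil$ is immediate from Theorem \ref{t5} applied with $j=5$ and $m=4$ (the hypotheses $j\ge m+1$, $m\ge4$, $n\ge3$ all hold, and $j+2-m=3$), so the entire content lies in the upper bound $m_5(K_4,nK_2)\le\lceil 2n/3\rceil$. Unwinding the definition, this asserts that for $t=\lceil 2n/3\rceil$ and every $2$-edge-coloring $(G^1,G^2)$ of $K=K_{5\times t}$ with $K_4\nsubseteq G^1$, the graph $G^2$ must contain $nK_2$, i.e. a matching of size $n$. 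Thus I would fix $t=\lceil 2n/3\rceil$ and show that $K_4$-freeness of $G^1$ forces $\nu(G^2)\ge n$.

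First I would record a subadditivity lemma: for all $a,b\ge1$,
\[ m_5(K_4,(a+b)K_2)\ \le\ m_5(K_4,aK_2)+m_5(K_4,bK_2). \]
Its proof is one line of layer-splitting. Set $t_a=m_5(K_4,aK_2)$ and $t_b=m_5(K_4,bK_2)$, and partition each of the five parts of $K_{5\times(t_a+t_b)}$ into a block of $t_a$ and a block of $t_b$ vertices; this exhibits two vertex-disjoint copies $K_{5\times t_a}$ and $K_{5\times t_b}$. On each copy $G^1$ is still $K_4$-free, so $G^2$ restricted to the first copy contains a matching of size $a$ and restricted to the second a matching of size $b$, and since the copies are vertex-disjoint their union is a matching of size $a+b$ in $G^2$.

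Then I would induct on $n$ with step $3$. The base cases $n\in\{3,4,5\}$ are exactly Theorem \ref{t4} with $j=5$, which gives $m_5(K_4,nK_2)=n-1=\lceil 2n/3\rceil$ for those three values (in particular $m_5(K_4,3K_2)=2$). For $n\ge6$ I apply the lemma with $a=3$, $b=n-3$ and the induction hypothesis:
\[ m_5(K_4,nK_2)\ \le\ m_5(K_4,3K_2)+m_5(K_4,(n-3)K_2)\ =\ 2+\Big\lceil\tfrac{2(n-3)}{3}\Big\rceil\ =\ \Big\lceil\tfrac{2n}{3}\Big\rceil, \]
where I used $\lceil 2(n-3)/3\rceil=\lceil 2n/3\rceil-2$. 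Combined with the lower bound this yields equality. Concretely, writing $n=3k+s$ with $s\in\{3,4,5\}$ and $k\ge0$, the induction collapses to $m_5(K_4,nK_2)\le 2k+(s-1)=\lceil 2n/3\rceil$.

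The main obstacle is not in this induction but is hidden in the base case $n=3$, i.e. in the inequality $m_5(K_4,3K_2)\le2$: this is the only place where $K_4$-freeness genuinely has to be converted into a matching of the complement. Here I rely on Theorem \ref{t4}; were it unavailable, I would prove it directly by taking a maximum matching $M$ of $G^2$ in $K_{5\times2}$, letting $W$ be the exposed vertices (an independent set of $G^2$), observing that $W$ meets at most three parts (four vertices of $W$ in four distinct parts are pairwise $G^1$-adjacent, forming a $K_4$), and then combining the absence of $G^2$-augmenting paths with the \emph{escape-part} property — by $K_4$-freeness each vertex of a fully matched part is joined in $G^2$ to an entire $W$-part — to rule out $\nu(G^2)\le2$ in the delicate three-part situation. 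Packaging this augmenting-path analysis once as the base case and propagating it through the layer-splitting induction is precisely what keeps the overall argument short.
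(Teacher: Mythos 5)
Your proposal is correct, and it reaches the upper bound by a genuinely different route from the paper. The paper proves $m_5(K_4,nK_2)\le\lceil 2n/3\rceil$ by peeling off one transversal layer $\{x_1^1,\dots,x_1^5\}$ at a time and splitting into cases according to $n\bmod 3$: when $n\not\equiv 0\pmod 3$ the ceiling drops by $1$ as $n$ drops by $1$, and the extra matching edge is extracted from the removed $K_5$ (whose $G^1$ cannot be complete since $K_4\nsubseteq G^1$); when $n\equiv 0\pmod 3$ the paper must gain two matching edges from a single removed layer, which it does via a counting claim on unmatched vertices and an appeal to $m_4(K_4,2K_2)=2$ on an $8$-vertex subgraph --- this is the delicate part of the paper's argument. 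Your block-splitting subadditivity lemma $m_5(K_4,(a+b)K_2)\le m_5(K_4,aK_2)+m_5(K_4,bK_2)$ (which is sound: the two sub-multipartite graphs are vertex-disjoint, $K_4$-freeness passes to induced subgraphs, and the two matchings union disjointly) replaces that case analysis entirely: taking $a=3$ and using $\lceil 2(n-3)/3\rceil=\lceil 2n/3\rceil-2$ (exact because $2\cdot 3/3$ is an integer), the whole upper bound collapses onto the base cases $n\in\{3,4,5\}$ of Theorem \ref{t4}, exactly as in the paper's base step. What your approach buys is brevity and robustness --- the arithmetic of the ceiling is handled once, the only combinatorial content sits in Theorem \ref{t4}, and the same template generalizes to $m_j(K_m,nK_2)$ with step $j+2-m$; what the paper's approach buys is that it works layer-by-layer without presupposing that the recursion step divides evenly into the ceiling, which is why the paper needs (and your method avoids) the separate $n=3k$ analysis. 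Both arguments take the lower bound from Theorem \ref{t5} and both lean on Theorem \ref{t4} for $n\in\{3,4,5\}$, so there is no circularity in your reduction.
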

\begin{proof}
By Theorem \ref{t5} the lower bound holds. So we most show that  $m_5(K_4, nK_2)\leq \lceil \frac{2n}{3}\rceil$. We prove $m_5(K_4, nK_2)\leq \lceil \frac{2n}{3}\rceil$ by induction on $n$. For $n=3,4,5$ theorem holds by Theorem \ref{t4}. Assume that for each, $6\leq n'\leq n-1$, the theorem holds. By contrary suppose that  $K=K_{j\times t}$  with partite sets $X_i=\{x_1^i,x_2^i,\ldots,x_{t}^i\}$ for each $i\in[j]$ and $t=\lceil \frac{2n}{3}\rceil$ is $2$-colorable to $(K_4,nK_2)$, that is there exist 2-edge-coloring $(G^1,G^2)$ of  $K_{5\times t}$, where $K_4\nsubseteq G^1$ and $nK_2\nsubseteq G^2$. For each $n\geq 6$, if $n\neq 3k$, then it can be check that $\lceil \frac{2(n-1)}{3}\rceil+1=\lceil \frac{2n}{3}\rceil$. Therefor assume that $n\neq 3k$ for each $k\geq 2$,  and set $X'_i=X_i\setminus \{x_1^i\}$ for each $i\in[j]$. Now  consider   $K'=K_{j\times (t-1)}=G[X'_1,\ldots,X'_j]$. Consider a 2-edge-coloring $(G'^1,G'^2)$ of  $K'$. As $m_j(K_4, (n-1)K_2)\leq t-1$,   $K_4\nsubseteq G'^1\subseteq G^1$,  one can check that $(n-1)K_2\subseteq G'^2\subseteq G^2$. Suppose that $M'$ be a copy of $(n-1)K_2$ in $G'^2$. Therefore, as $j=5$ and  $K_4\nsubseteq G^1$, one can say that there is at least one edges of $E(G[V'])$ say $e$, so that $e\in E(G^2)$, where $V'=\{x_1^i, ~i=1,2,\ldots,j\}$. So $M=M'\cup \{e\}$ is a matching of size $n$ in $G^2$, a contradiction. Which means that $m_j(K_4, nK_2)\leq t$,  for each $n\geq 3$ where $n\neq 3k$. Now assume that $n=3k$ for some $k\geq 2$. Hence, it can be check that $\lceil \frac{2(n-2)}{3}\rceil+1=\lceil \frac{2n}{3}\rceil=2k$. Set $X'_i=X_i\setminus \{x_1^i\}$ for each $i\in[j]$. Consider   $K'=K_{j\times (t-1)}=G[X'_1,\ldots,X'_j]$ and let  $(G'^1,G'^2)$ be a 2-edge-coloring of  $K'$. As $m_j(K_4, (n-2)K_2)\leq k-1$,   $K_4\nsubseteq G'^1\subseteq G^1$,  one can check that $mK_2\subseteq G'^2\subseteq G^2$ where $m\geq n-2$. Suppose that $M'$ be a M-M  in $G'^2$.  If $|M'|=n-1$,  then, as $j= 5$ and  $K_4\nsubseteq G^1$, one can say that there is at least one edges of $E(G[V'])$ say $e$, so that $e\in E(G^2)$, where $V'=\{x_1^i, ~i=1,2,\ldots,j\}$. So $M=M'\cup \{e\}$ is a matching of size $n$ in $G^2$, a contradiction. Hence assume that $|M'|=n-2$. Now, as $j=5$,  $n=3k$ and  $|M'|=n-2$ it is easy to say that the following claim is true: 
\begin{Claim}\label{c1}For each $(j_1,j_2,j_{3})$, where $j_i\in [5]$, there exist at lest one vertex of $X'_{j_1}\cup X'_{j_2}\cup X'_{j_{3}}$ say $x$, so that $x\notin V(M')$.
	
\end{Claim}
\begin{proof} As, $n=3k$ we have $t=2k$, that is $t$ is even. Also we have $|X'_i|=2k-1=\lceil \frac{2(n-2)}{3}\rceil$. Therefore as $|M'|=n-2$ we have $|V(M')|=2n-4=6k-4$. Hence,  for each $(j_1,j_2,j_{3})$, where $j_i\in [5]$, it can be say that  $|X'_{j_1}|+|X'_{j_2}|+|X'_{j_{3}}|=3(2k-1)=6k-3$. Which means that  for each $(j_1,j_2,j_{3})$ there exist a vertex of $X'_{j_1}\cup X'_{j_2}\cup X'_{j_{3}}$ say $x$, so that $x\notin V(M')$.
\end{proof}
Therefore,   since $j=5$, by Claim \ref{c1}, and by pigeon-hole
principle, there exist  three partition of $K$, say $X_{i_1},  X_{i_2},  X_{i_3}$, so that for each $r\in[3]$ there exist a vertex of $X_{i_r}$ say $v_r$ so that $v_r$ not belongs to $M$. W.l.g assume that $ X_{i_1}=X_i$ and $v_i=x_2^i$ for each $i\in[3]$.  Set $V''=\{x_2^1,x_2^2,x_2^3\}$. Therefore, it can be check that $ K''= G[V'\cup V'']\cong K_2\oplus K_{3\times 2}\subseteq  G^1$.  Therefore $K_{4\times 2}\subseteq K''$. Consider a 2-edge-coloring $(G''^1,G''^2)$ of  $K''$. As by Lemma \ref{l1}, $m_4(K_4, 2K_2)= 2$,  $K_{4\times 2}\subseteq K''$, and $K_4\nsubseteq G''^1\subseteq G^1$,  one can check that $2K_2\subseteq G''^2\subseteq G^2$. Suppose that $M''$ be a M-M in $G''^2$. Therefore, $M=M'\cup M''$ is a matching of size $n$ in $G^2$, which means that $m_5(K_4, nK_2)\leq 2k$ where $n=3k$.
   Therefore for each $n\geq 3$ we have $m_5(K_4, nK_2)\leq \lceil \frac{2n}{3}\rceil$. Which means that for each $n\geq 3$, $m_5(K_4, nK_2)= \lceil \frac{2n}{3}\rceil$ and the proof is complete.
\end{proof}
\begin{theorem} \label{t7}
	Suppose that $j\geq 6$ and   $n\geq 3$. Given that  $m_j(K_4, (n-1)K_2)= \lceil \frac{2n-2}{j-2}\rceil $, then:
	\[m_j(K_4, nK_2)= \lceil \frac{2n}{j-2}\rceil .\]
	
\end{theorem}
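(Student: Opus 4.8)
The lower bound $m_j(K_4,nK_2)\ge \lceil 2n/(j-2)\rceil$ is exactly Theorem \ref{t5} with $m=4$, so the entire task is the upper bound $m_j(K_4,nK_2)\le t$, where I abbreviate $t=\lceil 2n/(j-2)\rceil$ and $d=j-2\ge 4$. I would argue by strong induction on $n$, assuming the claimed formula at every smaller level (in particular the given value at $n-1$). Suppose for contradiction that some $2$-colouring $(G^1,G^2)$ of $K=K_{j\times t}$ has $K_4\nsubseteq G^1$ and no color-$2$ matching of size $n$. Delete one vertex from each part, say the transversal $V'=\{x_1^1,\dots,x_1^j\}$, to leave $K'=K_{j\times(t-1)}$.

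The single recurring device is that $K_4$-freeness forces a large color-$2$ matching on every transversal: $V'$ spans a copy of $K_j$, and the vertices left uncovered by a maximum color-$2$ matching of $G^2[V']$ are pairwise nonadjacent in $G^2$, hence form a clique of $G^1$; as $K_4\nsubseteq G^1$ this clique has at most three vertices, so $G^2[V']$ has a matching of size $s:=\lceil (d-1)/2\rceil$. These $s$ edges lie inside the deleted set $V'$, so they are automatically disjoint from any matching produced on $K'$. The plan is then to run the induction on $K'$ at the reduced level $n-s$, obtaining a color-$2$ matching $M'$ with $|M'|\ge n-s$, and to append the $s$ edges harvested from $V'$, reaching a color-$2$ matching of size $n$ — the desired contradiction.

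For this to go through I need $K'$, which has $t-1$ vertices per part, to lie at or above the threshold for $(n-s)K_2$, i.e. $\lceil 2(n-s)/d\rceil\le t-1$. A short computation with the residue of $2n$ modulo $d$ shows this is automatic for every $n$ except in the one exceptional family where $d=j-2$ is odd and $(j-2)\mid n$; this is precisely the analogue of the case $n=3k$ that is treated separately for $j=5$ in Theorem \ref{t6}. In the exceptional family the $s$ edges from $V'$ fall one short, so I would recurse one step further, to level $n-s-1$ (whose threshold does satisfy $\lceil 2(n-s-1)/d\rceil=t-1$), take a maximum $M'$ with $|M'|\ge n-s-1$, and split on $|M'|$: if $|M'|$ reaches $n-s$ I finish by appending the $s$ edges of $V'$ as before; otherwise $|M'|=n-s-1$ and I must manufacture one extra color-$2$ edge off $M'$. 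Here the matching of $V'$ leaves exactly three uncovered vertices, which form a color-$1$ triangle $T$; any $M'$-free vertex $v$ lying in a part disjoint from $T$ then completes, with $T$, a transversal of four parts, so either some edge $vw$ with $w\in T$ is in $G^2$ — the required extra edge, disjoint from both $M'$ and the matching of $V'$ — or else $\{v\}\cup T$ is a color-$1$ $K_4$, a contradiction. Equivalently one may exhibit a color-$1$ $K_{4\times2}$ on eight $M'$-free vertices and invoke Lemma \ref{l1} ($m_4(K_4,2K_2)=2$) to extract the two edges at once, exactly as in the proof of Theorem \ref{t6}.

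The lower bound and the transversal-matching device are immediate, and the generic residues close cleanly through the $s$-edge extraction. The genuine obstacle is the exceptional family $(j-2)\mid n$: there the arithmetic of $\lceil 2n/(j-2)\rceil$ is tight, and one must verify by a counting argument — the direct generalization of Claim \ref{c1} — that the $M'$-free vertices cannot all be confined to the three parts of the triangle $T$, so that a free vertex in a fourth part (hence the missing color-$2$ edge) is guaranteed. This is the step in which the hypotheses $j\ge 6$ and $n\ge 3$ are essential, and it is where I would expect to spend most of the effort.
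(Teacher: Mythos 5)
Your transversal-harvesting induction is a genuinely different route from the paper's: the paper restricts to the sub-grid $K_{j\times t'}$ with $t'=\lceil\frac{2n-2}{j-2}\rceil$, extracts an $(n-1)$-matching there, and then analyses the set $Y$ of $2t+2$ uncovered vertices (Claims \ref{c2} and \ref{c3}), whereas you delete a transversal, harvest $s=\lceil\frac{j-3}{2}\rceil$ colour-$2$ edges from it via $K_4$-freeness, and recurse at level $n-s$. In the generic residue classes your argument is cleaner and avoids the augmenting step entirely (modulo two caveats: you invoke the formula at level $n-s$, not just at $n-1$ as the theorem's hypothesis literally provides, and the small-$n$ instances where $n-s\le 1$ or $t-1=0$ need separate checking). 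The problem is the exceptional family.

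The gap is your final counting claim. In the case $d=j-2$ odd with $d\mid n$ you arrive at a matching $N\cup M'$ of size $n-1$ whose uncovered vertices are $T$ (three vertices in three parts) together with the $M'$-free vertices of $K'$, and you assert that a count in the spirit of Claim \ref{c1} forces an $M'$-free vertex into a fourth part. It does not. The total number of uncovered vertices is $jt-2(n-1)=2t+2$ (here $t=2n/d$ exactly), and three parts have combined capacity $3t\ge 2t+2$ whenever $t\ge 2$; likewise the $2t-1$ free vertices of $M'$ fit inside $3(t-1)$ slots for $t\ge 2$. So for every $t\ge2$ the uncovered set can be confined to exactly the three parts of $T$, all cross-edges among them can lie in $G^1$ without creating a $K_4$ (they span only three parts), and neither your ``fourth-part vertex'' nor your ``$K_{4\times 2}$ on eight free vertices'' exists. (The analogy with Claim \ref{c1} is misleading: that claim asserts every \emph{triple} of parts contains a free vertex, which for $j=5$ pigeonholes free vertices into three \emph{distinct} parts; it does not say free vertices escape a prescribed triple.) To close this case you need the paper's second device: since $j\ge 6$, the $j-3\ge 3$ fully covered parts contain $(j-3)t>n-1$ vertices, so some matching edge $e=v_1v_2$ has both ends outside the three special parts; the exchange argument of Claim \ref{c3} shows one of $v_1,v_2$ has at most one $G^2$-neighbour among the uncovered vertices, and that vertex together with the $K_{3\times 2}\subseteq G^1$ spanned by the uncovered set yields $K_4\subseteq G^1$. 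Without this augmenting step your exceptional case does not close.
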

\begin{proof}\label{t8}
 To prove $m_j(K_4, nK_2)\leq \lceil \frac{2n}{j-2}\rceil$ consider  $K=K_{j\times t}$  with partite sets $X_i=\{x_1^i,x_2^i,\ldots,x_{t}^i\}$ for each $i\in[j]$ and $t=\lceil \frac{2n}{j-2}\rceil$. Suppose that $K$ is $2$-colorable to $(K_4,nK_2)$, that is there exist 2-edge-coloring $(G^1,G^2)$ of  $K_{j\times t}$, where $K_4\nsubseteq G^1$ and $nK_2\nsubseteq G^2$.  Thus $K_4\nsubseteq G^1[V(K')]$ where $K'=K_{j\times t'}\subseteq K$,  $t'=\lceil \frac{2n-2}{j-2}\rceil$, so it can be check that  $(n-1)K_2\subseteq G^2[V(K')]$.  As $t=\lceil \frac{2n}{j-2}\rceil$ and $|M|=n-1$, where $M$ ba a M-M in $G^2$, then one can show that the following claim is true:
 
 \begin{Claim}\label{c2}
 There exists at least $2t+2$ vertices of $K$, say $Y$ so that the vertices of $Y$ not belongs to $M$.
 	
 \end{Claim}
 \begin{proof} It can be check that $|Y|=|V(K)|-2(n-1)$, hence $|Y|=jt-2(n-1)= 2t +(j-2)t-2(n-1)= 2t +(j-2)\lceil \frac{2n}{j-2}\rceil -2(n-1)$, as $(j-2)\lceil \frac{2n}{j-2}\rceil -2(n-1)\geq 2$, we have 	$|Y|\geq 2t+2$, and the proof of claim is complete.
 \end{proof}
If either $Y\cap X_i\neq \emptyset$ for at least four parts of $K$ or $t=1$, then one can check that $K_4\subseteq G^1[Y]$,  a contradiction. Hence, let $t\geq 2$, as $|Y|=2t+2$ and $|X_i|=t$, we can assume that $Y\cap X_i\neq \emptyset$ for three $i, i\in[j]$. W.l.g let $Y\cap X_i\neq \emptyset$ for each $i\in[3]$. As $t\geq 2$, it can be check that $K''=K_{3\times 2}\subseteq G^1[Y]$. Also as $j\geq 6$ and $|M|=n-1$, it is easy to say that there exist at least one edges of $M$ say $e=v_1v_2$, so that $v_1,v_2\notin X_i$ for $i=1,2,3$. Therefore w.l.g let $v_1\in X_4$ and $v_2\in X_5$. Now, by considering $v_1,v_2$ and $Y$, where $Y$ is the vertices of $K$ not belongs to $M$,  we have the following claim:  
 
 \begin{Claim}\label{c3}
 	W.l.g. let $|N_{G^2}(v_1)\cap Y|\geq |N_{G^2}(v_2)\cap Y| $. If $|N_{G^2}(v_1)\cap Y|\geq 2$, then $|N_{G^2}(v_2)\cap Y|=0$. If $|N_{G^2}(v_1)\cap Y|=1$, then $|N_{G^2}(v_2)\cap Y|\leq 1$ and if $|N_{G^2}(v_1)\cap Y|=1$,  then $v_1$ and $v_2$ have the same neighbor in $Y$.
 \end{Claim}
 \begin{proof} By contradiction. Suppose that $\{v,v'\}\subseteq N_{G^2}(v_1)\cap Y$ and $v''\in N_{G^2}(v_2)\cap Y$, in this case, we set $ M' = (M\setminus \{v_1,v_2\}) \cup\{v_1v,v_2v''\}$. Clearly, $M'$ is a matching with $|M'| =n$, which contradicts the  $nK_2\nsubseteq G^2$. If $|N_{G^2}(v_i)\cap Y|=1$ and $v_i$ has a different neighbor then the proof is  same.
 \end{proof}
Therefore by Claim \ref{c3} it can be check that for at least one $i\geq 4$, there exist at least one vertex of $X_i$, say $w$, so that $|N_{G^2}(w)\cap Y|\leq 1$. Hence $|N_{G^1}(w)\cap Y|\geq |Y|-1$. So, one can check that $K_4\subseteq G^1[V(K'')\cup \{w\}]$, a contradiction again.
So, we have  $m_j(K_4, nK_2)\leq \lceil \frac{2n}{j-2}\rceil $ for each $j\geq 6$ and $n\geq 3$.  Therefore by Theorem \ref{t5}, we have $m_j(K_4, nK_2)= \lceil \frac{2n}{j-2}\rceil $ for each $j\geq 6$ and $n\geq 3$, which means that the proof is complete.
 
\end{proof}
 Combining Theorems  \ref{t2}, \ref{t3}, \ref{t4},  \ref{t6},  and \ref{t7}, and Lemma \ref{l1} we
 obtain the next theorem which characterize the exact value of the M-R-number
 $m_j(K_4, nK_2)$ for each $j\geq 2$ and each $n\geq 1$ as follows:
 \begin{theorem}\label{t1} If $j \geq 2$, and $n\geq 1$, then:

 	\[m_j(K_4, nK_2)=  \left\lbrace
 	\begin{array}{ll}
 		
 		\infty& ~~~~~~~~~ ~if ~j=2,3,~~ ~~\vspace{.2 cm}\\
 		\lceil\frac{2n}{j-2}\rceil& ~~~~~~ otherwise.~\vspace{.2 cm}\\
 		
 	\end{array}
 	\right.
 	\]	
 	
 \end{theorem}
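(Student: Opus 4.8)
The plan is to assemble the claimed formula from the component results already in hand, organising everything as a case analysis on $j$. There is no new mathematical content to produce: the whole task is to check that the single uniform expression $\lceil 2n/(j-2)\rceil$ reproduces the specialised values established in Lemma \ref{l1} and Theorems \ref{t2}, \ref{t3}, \ref{t4}, \ref{t6}, \ref{t7}, and to clear the small stripe counts $n=1,2$ by hand in each range of $j$.

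First I would dispose of $j\in\{2,3\}$. Since a copy of $K_4$ cannot sit inside any graph having at most three parts, Theorem \ref{t2} (applied with $m=4$, so that $2\le j\le m-1=3$) gives $m_j(K_4,nK_2)=\infty$ for $j=2,3$ and every $n$, which is the first branch of the formula. Next, for $j=4$ I would combine Lemma \ref{l1} (covering $n\le 2$) with Theorem \ref{t3} (covering $n\ge 3$) to get $m_4(K_4,nK_2)=n$; since $n=\lceil 2n/2\rceil=\lceil 2n/(j-2)\rceil$ exactly when $j=4$, this agrees with the stated value. For $j=5$ I would invoke Theorem \ref{t6}, which already yields $m_5(K_4,nK_2)=\lceil 2n/3\rceil=\lceil 2n/(j-2)\rceil$ for all $n\ge 3$, and then verify the two leftover cases $m_5(K_4,K_2)=1=\lceil 2/3\rceil$ and $m_5(K_4,2K_2)=2=\lceil 4/3\rceil$ recorded in the text.

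The remaining range $j\ge 6$ is where the induction lives, since Theorem \ref{t7} is phrased as a single inductive step that upgrades the equality for $(n-1)K_2$ to the equality for $nK_2$. So I would anchor the induction at the small cases: for $j\ge 6$ one has $m_j(K_4,K_2)=1=\lceil 2/(j-2)\rceil$ and $m_j(K_4,2K_2)=1=\lceil 4/(j-2)\rceil$, the latter because $j-2\ge 4$, which are exactly the base facts noted just before Theorem \ref{t6}. Feeding $m_j(K_4,2K_2)=\lceil 4/(j-2)\rceil$ into Theorem \ref{t7} with $n=3$ gives $m_j(K_4,3K_2)=\lceil 6/(j-2)\rceil$, and iterating the step produces $m_j(K_4,nK_2)=\lceil 2n/(j-2)\rceil$ for all $n\ge 3$, completing the second branch and hence the whole statement.

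I do not anticipate a genuine obstacle, since the theorem is a synthesis of earlier work; the only things to watch are entirely clerical. Specifically, I would double-check that the ceiling identities line up at the boundary values $n=1,2$ in each of the cases $j=4$, $j=5$, and $j\ge 6$, and that the hypothesis of Theorem \ref{t7} is in each step genuinely supplied by the previous induction conclusion rather than silently assumed. In particular the legitimacy of the first application of Theorem \ref{t7} for $j\ge 6$ rests on the base value $\lceil 4/(j-2)\rceil=1$ being correct, so I would confirm that equality explicitly before invoking the inductive step.
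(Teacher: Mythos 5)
Your proposal is correct and matches the paper's approach exactly: the paper proves this theorem simply by combining Lemma \ref{l1} and Theorems \ref{t2}, \ref{t3}, \ref{t4}, \ref{t6}, \ref{t7}, and your write-up is just that synthesis made explicit, including the case split on $j$ and the iteration of Theorem \ref{t7} from the base value $m_j(K_4,2K_2)=1=\lceil 4/(j-2)\rceil$ for $j\geq 6$. Your care in checking that the hypothesis of Theorem \ref{t7} is genuinely supplied at each inductive step is exactly the right point to verify, and it holds.
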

\subsection{Size  Multipartite Ramsey numbers related to stripes versus $K_5$}
In this section, we obtain the values of M-R-number  $m_j(K_5, nK_2)$ for each  $j\geq 2$ and $n\geq 1$. By Theorem \ref{t3} we have $m_j(K_4, nK_2)=\infty$ for $j=2,3,4$, also by Lemma \ref{l1} and Theorem \ref{t3} we have  $m_5(K_5, nK_2)=n$ for  each $n$, and by Theorem \ref{t4} we have  $m_6(K_5, nK_2)=n-1$ for each $n\in \{3,4,5\}$. It can be check that  $m_j(K_5, 2K_2)=1$ for each $j\geq 7$. Now suppose that $j\geq 6$ and $n$ be  positive integers, it is clear that $m_6(K_5, K_2)=1$. Also by considering $K_6=(K_{3}\oplus 3K_1 )\cup K_3$ it can be check that $m_6(K_5, 2K_2)\geq 2$ and  as $K=K_{6\times 2}$ has two disjoint copy of $K_6$ it is easy to say that in any two coloring of the edges of $K$ say $(G^1, G^2)$, either $K_5\subseteq G^1$ or $2K_2\subseteq G^2$. Hence $m_6(K_5, 2K_2)=2$. In the following results  we get the exact value of  M-R-number $m_6(K_5, nK_2)$, for each $ n\geq 6$. We begin with the following theorem:
\begin{theorem}\label{th4}
For each $n\in N= \{6,7,8\}$,	we have:
	\[m_6(K_5, nK_2)=n-2\]
	
\end{theorem}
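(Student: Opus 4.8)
The plan is to establish the two inequalities $m_6(K_5,nK_2)\ge n-2$ and $m_6(K_5,nK_2)\le n-2$ for $n\in\{6,7,8\}$, using that $n-2=\lceil 2n/3\rceil$ holds exactly on this range. The lower bound is immediate: applying Theorem \ref{t5} with $j=6$ and $m=5$ (so that $j=m+1$) gives $m_6(K_5,nK_2)\ge \lceil \tfrac{2n}{6+2-5}\rceil=\lceil\tfrac{2n}{3}\rceil=n-2$. Hence only the upper bound requires real work.

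For the upper bound I set $t=n-2$ and suppose toward a contradiction that $K=K_{6\times t}$ admits a $2$-edge-coloring $(G^1,G^2)$ with $K_5\nsubseteq G^1$ and $nK_2\nsubseteq G^2$. First I would fix a maximum matching $M$ of $G^2$; since $nK_2\nsubseteq G^2$ we have $|M|\le n-1$, so the set $Y$ of vertices uncovered by $M$ satisfies $|Y|=6t-2|M|\ge 6t-2(n-1)=4t-2$. Because $M$ is maximum, $Y$ is $G^2$-independent, so every edge of $K$ inside $Y$ lies in $G^1$; that is, $G^1[Y]$ is the complete multipartite graph on $Y$. A counting argument then shows $Y$ must meet at least $4$ of the $6$ parts (meeting $\le 3$ would force $|Y|\le 3t<4t-2$ for $n\ge 6$), and if $Y$ met $5$ parts we could pick one vertex from each to obtain $K_5\subseteq G^1[Y]$, a contradiction. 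So $Y$ meets exactly $4$ parts, say $X_1,\dots,X_4$, and I record that each $a_i:=|Y\cap X_i|\ge t-2\ge 2$, since $\sum a_i=|Y|\ge 4t-2$ and each $a_i\le t$.

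The crux is this exactly-four-parts case, which I would handle exactly as in Theorem \ref{t7}. The two untouched parts $X_5,X_6$ are entirely covered by $M$, so at most $2$ vertices of $X_1\cup\cdots\cup X_4$ are covered, and hence at least $t-1\ge 1$ edges of $M$ join $X_5$ to $X_6$; I fix one such edge $e=v_1v_2$ with $v_1\in X_5,\ v_2\in X_6$. If there existed distinct vertices $y_1\in N_{G^2}(v_1)\cap Y$ and $y_2\in N_{G^2}(v_2)\cap Y$, then $(M\setminus\{e\})\cup\{v_1y_1,v_2y_2\}$ would be a larger matching in $G^2$, contradicting maximality; the resulting Claim \ref{c3}-type dichotomy forces one endpoint, say $w\in\{v_1,v_2\}$, to satisfy $|N_{G^2}(w)\cap Y|\le 1$, i.e. $|N_{G^1}(w)\cap Y|\ge|Y|-1$. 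Since $w$ then has at most one non-neighbour in $Y$ and each $a_i\ge 2$, I can choose one $G^1$-neighbour of $w$ from each of $Y\cap X_1,\dots,Y\cap X_4$; these four vertices are pairwise $G^1$-adjacent (they lie in $Y$ in distinct parts), and together with $w$ they form a $K_5\subseteq G^1$, the final contradiction.

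The main obstacle is this last paragraph: the reduction to ``exactly four parts'' is routine counting, but forcing the $K_5$ there is delicate because the four active parts may only just reach the threshold $a_i\ge 2$, so the augmenting-matching step is needed precisely to produce a single eligible vertex $w$ whose lone possible non-neighbour in $Y$ can be dodged. I would double-check that it is exactly the inequality $t-2\ge 2$ (equivalently $n\ge 6$) that makes this dodge possible, which is consistent with the theorem's restriction to $n\in\{6,7,8\}$.
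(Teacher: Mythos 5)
Your proposal is correct and follows essentially the same route as the paper: lower bound from the $K_{3\times(n-3)}$ construction (equivalently Theorem \ref{t5}), and for the upper bound the uncovered set $Y$ of a maximum matching, the reduction to $Y$ meeting exactly four parts with at least two vertices each, an $M$-edge between the two fully covered parts, and the augmenting argument of Claim \ref{c3} producing a vertex $w$ with at most one $G^2$-neighbour in $Y$ that completes a $K_5$ in $G^1$. Your write-up is in fact a little cleaner than the paper's (it avoids the case split on $n=6$ versus $n=7,8$ and makes the $G^2$-independence of $Y$ explicit), but the underlying argument is the same.
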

\begin{proof}
Consider a $2$-edge-coloring $(G^1,G^2)$ of  $K=K_{6\times (n-3)}$, where $n\in N$, $G^2\cong K_{3\times (n-3)}$ and   $\overline{G^2}=G^1$. It is easy to say that $G^1$ is a $4$-partite graph with $n-3$ vertices in $3$ parts  and $3(n-3)$ vertices in the last parts. That is  $G^1\cong K_{n-3,\ldots, n-3, 3(n-3)}$. By definition $G^1$, one can check that $K_{5}\nsubseteq G^1$, also as $|V(G^2)|=3(n-3)\leq 2n-1$ for each $n\in N$, hence $nK_2\nsubseteq G^2$. Which means that $K$ is 2-colorable to $(K_{5},nK_2)$, that is $m_6(K_{5}, nK_2)\geq n-2$ for each $n\in N$. 
	
 Now,let  $K=K_{6\times (n-2)}$  with partite sets $X_i=\{x_1^i,x_2^i,\ldots,x_{n-2}^i\}$. Consider 2-edge-coloring $(G^1,G^2)$ of  $K$, where $K_5\nsubseteq G^1$. Now we consider three cases as follow:
 
 {\bf Case 1}: $n=6$. As $m_6(K_5, 5K_2)=4$ and $K_5\nsubseteq G^1$, we can say that $M'=5K_2\subseteq G^2$. As, $|M'|=5$ and $6K_2\nsubseteq G^2$, then it can be check that the following claim is true:
 \begin{Claim}\label{c4}For each $(j_1,j_2,j_{3})$, where $j_i\in [6]$, there exists two vertex of $X'_{j_1}\cup X'_{j_2}\cup X'_{j_{3}}$ say $x,x'$, so that $x,x'\notin V(M')$.
 \end{Claim}
 Also as $K_5\nsubseteq G^1$, one can say that there exist two parts of $K$ say $X,X'$ so that all vertices of $X$ and $X'$ lie in $V(M)$, where $M$ ba a M-M in $G^2$. W.l.g assume that $X=X_1, X'=X_2$.  Therefore by Claim \ref{c4} for each $i\in \{3,4,5,6\}$ we have $|X_i\cap Y|\geq 2$, where $Y$ be the vertices of $K$ that is not belongs to $M$. W.l.g assume that $X'_i=\{x_1^i, x_2^i\}\subseteq X_i\cap Y$ for each $i\in \{3,4,5,6\}$. Hence, it can say that $K''=K_{4\times 2}\cong G[X'_3,\ldots, X'_6]\subseteq G^1[Y]$. Since for $i=1,2$ $X_i\subseteq V(M)$, $|X_i|=4$ and $|M|=5$, it is easy to check that there exist at least one edges of $E(M)$ say $e$, so that $e\in E(G[X_1X_2])$. Therefore w.l.g let $e=x_1^1x_1^2\in E(M)$. Hence by the proof is same as proof of Claim \ref{c3} it can be check that, there exist at least one vertex of $\{x_1^1,x_1^2\}$, say $w$, so that $|N_{G^2}(w)\cap  V(K'')|\leq 1$. Hence $|N_{G^1}(w)\cap V(K'')|\geq 7$. So, one can check that $K_5\subseteq G^1[V(K'')\cup \{w\}]$, a contradiction again.  Which means that  $m_6(K_5, 6K_2)= 4$. 
 
 {\bf Case 2}: $n=7(8)$. As $m_6(K_5, 6K_2)=4$ and $K_5\nsubseteq G^1$, we can say that $M'=6K_2\subseteq G^2$ for $n=7$( for $n=8$ the proof is same). Therefore the proof is same as the Case 1.
 
  Hence by Cease 1,2 we have the proof is complete.

\end{proof}
\begin{theorem}\label{th5}
	For each $n\geq 6$,	we have:
	\[m_6(K_5, nK_2)=n-\lfloor\frac{n}{3}\rfloor.\]
	
\end{theorem}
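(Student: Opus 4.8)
The plan is to first record the arithmetic identity $n-\lfloor n/3\rfloor=\lceil 2n/3\rceil$, valid for every $n$ (both sides equal $2k,2k+1,2k+2$ according as $n=3k,3k+1,3k+2$), so that the target value is precisely the quantity $\lceil\frac{2n}{j+2-m}\rceil$ of Theorem \ref{t5} specialized to $j=6$, $m=5$. The lower bound $m_6(K_5,nK_2)\geq\lceil\frac{2n}{3}\rceil$ is then immediate from Theorem \ref{t5} (its hypotheses $j=m+1$, $m\geq 4$, $n\geq 3$ hold), so the whole content is the upper bound $m_6(K_5,nK_2)\leq\lceil\frac{2n}{3}\rceil$, which I would prove by induction on $n$. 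The base cases $n\in\{6,7,8\}$ are supplied by Theorem \ref{th4}, since there $n-2=\lceil\frac{2n}{3}\rceil$, and the inductive step is carried out for $n\geq 9$. For the step, set $t=\lceil\frac{2n}{3}\rceil$ and assume for contradiction a $2$-edge-coloring $(G^1,G^2)$ of $K=K_{6\times t}$ with $K_5\nsubseteq G^1$ and $nK_2\nsubseteq G^2$; write $X_i=\{x_1^i,\dots,x_t^i\}$, let $V'=\{x_1^i:i\in[6]\}$ be obtained by deleting one vertex from each part, and let $K'=K_{6\times(t-1)}$ be the residual graph.

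When $n\not\equiv 0\pmod 3$ one checks $\lceil\frac{2(n-1)}{3}\rceil+1=t$, i.e. $t-1=\lceil\frac{2(n-1)}{3}\rceil$, so the induction hypothesis applied to $K'$ (whose color-$1$ part is still $K_5$-free) yields an $(n-1)K_2$ inside $G^2[V(K')]$; let $M'$ be a maximum matching there. Since the six vertices of $V'$ lie in distinct parts they span a $K_6$ in $K$, and $K_5\nsubseteq G^1$ forces $G^1[V']\neq K_6$, hence an edge $e\in G^2[V']$; as $V'$ is disjoint from $V(M')$, the set $M'\cup\{e\}$ is an $nK_2$ in $G^2$, a contradiction. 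This is the exact analogue, with $j=6$ and $K_5$ replacing $j=5$ and $K_4$, of the generic one-edge extension in the proof of Theorem \ref{t6}.

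The remaining case $n=3k\equiv 0\pmod 3$ is the main obstacle, and I would treat it as in Theorem \ref{t6}. Here $t=2k$ and $\lceil\frac{2(n-2)}{3}\rceil=2k-1=t-1$, so the induction hypothesis applied to $K'$ gives a maximum matching $M'$ of $G^2[V(K')]$ with $|M'|\geq n-2$; since $nK_2\nsubseteq G^2$ we have $|M'|\in\{n-2,n-1\}$. If $|M'|=n-1$ the one-edge extension of the previous paragraph again produces an $nK_2$. If $|M'|=n-2$ then $|V(M')|=6k-4$, while any three parts of $K'$ contain $3(2k-1)=6k-3>6k-4$ vertices; hence no three parts are covered by $M'$, so the free vertices $V(K')\setminus V(M')$ meet at least four parts (the analogue of Claim \ref{c1}). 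Choosing one free vertex in each of four distinct parts and adjoining them to $V'$ gives a set $W$ of $10$ vertices inducing $K_{2,2,2,2,1,1}=K_2\oplus K_{4\times 2}$, which contains a spanning $K_{5\times 2}$ (map four of the five independent pairs onto the four size-$2$ parts and the last pair onto the two singletons; the two singletons being adjacent is harmless, since that pair is a non-edge of $K_{5\times 2}$). Applying $m_5(K_5,2K_2)=2$ from Lemma \ref{l1} to the coloring restricted to this $K_{5\times 2}$, and using $K_5\nsubseteq G^1$, yields $2K_2\subseteq G^2[W]$; as $W$ is disjoint from $V(M')$, adjoining these two edges to $M'$ gives an $nK_2$ in $G^2$, the final contradiction. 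The delicate points are the vertex count guaranteeing four free parts and the (non-induced) embedding $K_{5\times 2}\subseteq K_2\oplus K_{4\times 2}$; the rest is bookkeeping parallel to Theorem \ref{t6}.
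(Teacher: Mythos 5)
Your proposal is correct, and for the lower bound and the case $3\nmid n$ it coincides with the paper's argument (same construction for the bound, same base cases $n\in\{6,7,8\}$ from Theorem \ref{th4}, same one-vertex-per-part deletion followed by a one-edge extension through the $K_6$ on the deleted vertices). Where you genuinely diverge is the case $3\mid n$, which is the crux. The paper exploits the fact that here $t=n-\lfloor n/3\rfloor$ equals $(n-1)-\lfloor (n-1)/3\rfloor$, so it applies the induction hypothesis for $n-1$ to the \emph{same} board $K_{6\times t}$, takes a maximum matching $M$ of size $n-1$ in $G^2$, and then analyzes the uncovered set $Y$: it meets at most four parts (else $K_5\subseteq G^1[Y]$), so two parts are fully covered, Claim \ref{c5} yields $K_{4\times 2}\subseteq G^1[Y]$, an $M$-edge joins the two covered parts, and a Claim \ref{c3}-style swap argument gives a vertex $w$ with at least $7$ of the $8$ vertices of that $K_{4\times 2}$ as $G^1$-neighbours, producing $K_5\subseteq G^1$. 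You instead step the matching down by two (using $\lceil 2(n-2)/3\rceil=t-1$), shrink the board, and when the residual maximum matching has size exactly $n-2$ you use the counting bound $6k-3>6k-4$ to find free vertices in four distinct parts, assemble a spanning $K_{5\times 2}$ inside $K_2\oplus K_{4\times 2}$ on deleted-plus-free vertices, and invoke $m_5(K_5,2K_2)=2$ to extend by two edges; this transplants the mechanism of the paper's Theorem \ref{t6} to $K_5$. Your route buys uniformity with Theorem \ref{t6} and replaces the paper's somewhat delicate structural steps (the existence of two fully covered parts, of an $M$-edge between them, and the neighbourhood count) with a single appeal to a known Ramsey base case plus an easy embedding observation; the paper's route avoids a two-step descent in $n$ and keeps the whole argument on one board. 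Both are valid proofs of the upper bound, and combined with the shared lower bound they establish the theorem.
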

\begin{proof}
	Consider a $2$-edge-coloring $(G^1,G^2)$ of  $K=K_{6\times t}$, where $n\geq 6$ and $t=n-\lfloor\frac{n}{3}\rfloor-1$, $G^2\cong K_{3\times t}$ and   $\overline{G^2}=G^1$. It is easy to say that $G^1$ is a $4$-partite graph with $t$ vertices in $3$ parts  and $3t$ vertices in the last parts. That is  $G^1\cong K_{t,\ldots,t, 3t}$. By definition $G^1$, one can check that $K_{5}\nsubseteq G^1$, also as $|V(G^2)|=3t\leq 2n-1$ for each $n\geq 6$, hence one can check that and $nK_2\nsubseteq G^2$, that is $K$ is 2-colorable to $(K_{5}, nK_2)$, which means that $m_6(K_{5}, nK_2)\geq t+1$ for each $n\geq 6$. 
	
	Now,  for each $n\geq 6$, let  $K=K_{6\times t}$  with partite sets $X_i=\{x_1^i,x_2^i,\ldots,x_{t}^i\}$ where $t=n-\lfloor\frac{n}{3}\rfloor$. Consider 2-edge-coloring $(G^1,G^2)$ of  $K$, where $K_5\nsubseteq G^1$.  We prove $m_6(K_5, nK_2)\leq t$ by induction on $n$, for $n=3,\ldots,8$ theorem holds by Theorem \ref{t4} and \ref{th4}. Assume that for each, $9\leq n'\leq n-1$, the theorem holds. Now we consider two cases as follow:
	
	{\bf Case 1}: $\lfloor\frac{n}{3}\rfloor=\lfloor\frac{n-1}{3}\rfloor$. Hence we have $t=n-\lfloor\frac{n}{3}\rfloor=1+(n-1)-\lfloor\frac{n-1}{3}\rfloor=1+t'$, therefore by induction, as  $m_6(K_5, (n-1)K_2)=t'=t-1$ and $K_5\nsubseteq G^1$, we can say that $M'=(n-1)K_2\subseteq G^2[X'_1,\ldots,X'_6]$, where $X_i'=X_i\setminus \{x_1^i\}$ for each $i\in[6]$ and $|X'_i|=t'=t-1$. As, $|M'|=n-1$ and $nK_2\nsubseteq G^2$, then it can be check that $K_6\cong G[\{x_1^1,\ldots,x^6_1\}]\subseteq G^1$, which means that the proof is complete.

	{\bf Case 2}: $\lfloor\frac{n}{3}\rfloor=1+\lfloor\frac{n-1}{3}\rfloor$. Hence we have $t=n-\lfloor\frac{n}{3}\rfloor=n-1-\lfloor\frac{n-1}{3}\rfloor$, therefore by induction, as  $m_6(K_5, (n-1)K_2)=t$ and $K_5\nsubseteq G^1$, we can say that $M'=(n-1)K_2\subseteq G^2$. As, $|M'|=n-1$ and $nK_2\nsubseteq G^2$, then it can be check that the following claim is true:
	\begin{Claim}\label{c5}For each $(j_1,j_2,j_{3})$, where $j_i\in [6]$, there exist at lest two vertex of $X'_{j_1}\cup X'_{j_2}\cup X'_{j_{3}}$ say $x,x'$, so that $x,x'\notin V(M')$.
	\end{Claim}
	Also as $K_5\nsubseteq G^1$, one can say that there exist two parts of $K$ say $X,X'$ so that all vertices of $X$ and $X'$ lie in $V(M)$, where $M$ ba a M-M in $G^2$. W.l.g assume that $X=X_1, X'=X_2$.  Therefore by Claim \ref{c5} for each $i\in \{3,4,5,6\}$ we have $|X_i\cap Y|\geq 2$, where $Y$ be the vertices of $K$ that is not belongs to $M$. W.l.g assume that $X'_i=\{x_1^i, x_2^i\}\subseteq X_i\cap Y$ for each $i\in \{3,4,5,6\}$. Hence, it can say that $K''=K_{4\times 2}\cong G[X'_3,\ldots, X'_6]\subseteq G^1[Y]$. Since for $i=1,2$ $X_i\subseteq V(M)$, $|X_i|=t$ and $|M|=n-1$ and $n\geq9$, it is easy to check that there exist at least one edges of $E(M)$ between $X_1$ and $X_2$. Therefore w.l.g let $x_1^1x_1^2\in E(M)$. Hence by the proof is same as proof of Claim \ref{c3} it can be check that, there exist at least one vertex of $\{x_1^1,x_1^2\}$, say $w$, so that $|N_{G^2}(w)\cap  V(K'')|\leq 1$. Hence $|N_{G^1}(w)\cap V(K'')|\geq 7$. So, one can check that $K_5\subseteq G^1[V(K'')\cup \{w\}]$, a contradiction again.  Which means that  $m_6(K_5, nK_2)= t$. 
	
	Hence by Cease 1,2 we have the proof is complete.

\end{proof}
It can be say that $m_j(K_5, 2K_2)=m_j(K_5, K_2)= 1$ for each $j\geq 7$.  In the following results  we get the exact value of  M-R-number $m_j(K_5, nK_2)$, for each $j\geq 7$ and each $ n\geq 3$.
\begin{theorem} \label{th6}
	Suppose that $j\geq 7$ and   $n\geq 3$. Given that  $m_j(K_5, (n-1)K_2)= \lceil \frac{2n-2}{j-3}\rceil $,	then:
	\[m_j(K_5, nK_2)= \lceil \frac{2n}{j-3}\rceil .\]
	
\end{theorem}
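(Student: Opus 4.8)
The plan is to follow the template of the proof of Theorem \ref{t7} almost verbatim, replacing $K_4$ by $K_5$ and the ratio $\frac{2n}{j-2}$ by $\frac{2n}{j-3}$. The lower bound $m_j(K_5, nK_2)\geq \lceil\frac{2n}{j-3}\rceil$ is immediate from Theorem \ref{t5} with $m=5$ (valid since $j\geq 7\geq m+1$), so only the upper bound $m_j(K_5, nK_2)\leq \lceil\frac{2n}{j-3}\rceil$ needs work. First I would set $t=\lceil\frac{2n}{j-3}\rceil$ and assume for contradiction that $K=K_{j\times t}$ admits a $2$-edge-coloring $(G^1,G^2)$ with $K_5\nsubseteq G^1$ and $nK_2\nsubseteq G^2$.

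Restricting to $K'=K_{j\times t'}\subseteq K$ with $t'=\lceil\frac{2n-2}{j-3}\rceil\leq t$, the hypothesis $m_j(K_5,(n-1)K_2)=t'$ together with $K_5\nsubseteq G^1[V(K')]$ forces $(n-1)K_2\subseteq G^2[V(K')]\subseteq G^2$; since $nK_2\nsubseteq G^2$, a maximum matching $M$ of $G^2$ has size exactly $n-1$. Let $Y$ be the set of vertices missed by $M$. As in Claim \ref{c2} I would compute $|Y|=jt-2(n-1)=3t+\big((j-3)t-2(n-1)\big)\geq 3t+2$, using $(j-3)t\geq 2n$. The decisive structural remark is that $Y$ is independent in $G^2$ (otherwise $M$ is not maximum), so every edge of $K$ inside $Y$ lies in $G^1$; hence if $Y$ met five distinct parts, or if $t=1$, we would already find $K_5\subseteq G^1[Y]$. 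Thus $t\geq 2$ and $Y$ meets at most four parts; since $|Y|\geq 3t+2>3t$ it meets exactly four, say $X_1,\dots,X_4$, and the bound $|Y|\geq 3t+2$ forces $|Y\cap X_i|\geq 2$ for each $i\in[4]$, yielding $K''=K_{4\times 2}\subseteq G^1[Y]$.

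It remains to extend $K''$ to a $K_5$ in $G^1$. Because $Y$ avoids $X_5,\dots,X_j$, all vertices of these $j-4\geq 3$ parts are matched, and a count of the matched vertices inside $X_1\cup\dots\cup X_4$ (at most $4t-|Y|\leq t-2$) shows that at least $(n-1)-(t-2)=n+1-t\geq 1$ edges of $M$ lie entirely in $X_5\cup\dots\cup X_j$; here $t\leq\lceil n/2\rceil\leq n$ is used. Fix such an edge $e=v_1v_2$. Exactly as in Claim \ref{c3}, an augmentation argument shows that, relabelling so that $|N_{G^2}(v_1)\cap Y|\geq|N_{G^2}(v_2)\cap Y|$, the vertex $w=v_2$ satisfies $|N_{G^2}(w)\cap Y|\leq 1$ (two independent re-routings would otherwise enlarge $M$ to size $n$). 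Since $w$ lies in a part disjoint from the four parts of $K''$ and has at most one $G^2$-neighbour among the eight vertices of $V(K'')\subseteq Y$, each of the four parts of $K''$ contains a $G^1$-neighbour of $w$; these four vertices together with $w$ span $K_5$ in $G^1$, the desired contradiction.

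The routine ingredients are the arithmetic of $|Y|$ and the distribution of $Y$ among the parts. The step I expect to require the most care is guaranteeing a matching edge with \emph{both} endpoints outside $X_1,\dots,X_4$: this is precisely where the hypothesis $j\geq 7$ enters (leaving at least three untouched parts and making the count $n+1-t\geq 1$ go through), and it is the analogue of the place where the $K_4$ argument used $j\geq 6$. Once that edge is secured, the augmentation in Claim \ref{c3} and the final $K_5$-extraction are direct transcriptions of the $K_4$ case.
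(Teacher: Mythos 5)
Your proposal is correct and follows essentially the same route as the paper: restrict to $K_{j\times t'}$ to extract an $(n-1)K_2$, bound the set $Y$ of unmatched vertices by $3t+2$, show $Y$ concentrates in exactly four parts yielding $K_{4\times 2}\subseteq G^1[Y]$, find a matching edge with both ends outside those parts, and use the augmenting-path claim to produce a vertex $w$ with at most one $G^2$-neighbour in $Y$, extending to a $K_5$ in $G^1$. Your write-up actually makes explicit two counts the paper leaves as "it can be checked" (that $Y$ is independent in $G^2$, and that $n+1-t\geq 1$ guarantees a matching edge wholly inside $X_5\cup\cdots\cup X_j$), but the argument is the same.
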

\begin{proof} 
	To prove $m_j(K_5, nK_2)\leq \lceil \frac{2n}{j-3}\rceil$ consider  $K=K_{j\times t}$  with partite sets $X_i=\{x_1^i,x_2^i,\ldots,x_{t}^i\}$ for each $i\in[j]$ and $t=\lceil \frac{2n}{j-3}\rceil$. Suppose that $K$ is $2$-colorable to $(K_5,nK_2)$, that is there exist 2-edge-coloring $(G^1,G^2)$ of  $K_{j\times t}$, where $K_5\nsubseteq G^1$ and $nK_2\nsubseteq G^2$.  Thus $K_5\nsubseteq G^1[V(K')]$ where $K'=K_{j\times t'}\subseteq K$,  $t'=\lceil \frac{2n-2}{j-3}\rceil$, so it can be check that  $(n-1)K_2\subseteq G^2[V(K')]$.  As $t=\lceil \frac{2n}{j-3}\rceil$ and $|M|=n-1$ where $M$ ba a M-M in $G^2$, then one can show that the following claim is true:
	
	\begin{Claim}\label{c6}
		There exists at least $3t+2$ vertices of $K$, say $Y$ so that the vertices of $Y$ not belongs to $M$.
		
	\end{Claim}
	\begin{proof} It can be check that $|Y|=|V(K)|-2(n-1)$, hence $|Y|=jt-2(n-1)= 3t +(j-3)t-2(n-1)= 3t +(j-3)\lceil \frac{2n}{j-3}\rceil -2(n-1)$, as $(j-3)\lceil \frac{2n}{j-3}\rceil -2(n-1)\geq 2$ we have 	$|Y|\geq 3t+2$, and the proof of claim is complete.
	\end{proof}
	If $Y\cap X_i\neq \emptyset$ for at least five parts of $K$, then one can check that $K_5\subseteq G^1[Y]$,  a contradiction. Hence as $|Y|=3t+2$ and $|X_i|=t$, we can assume that $Y\cap X_i\neq \emptyset$ for four $i\in[j]$. W.l.g let $Y\cap X_i\neq \emptyset$ for each $i\in[4]$. As $n\geq 3$, and $nK_2\nsubseteq G^2$ if $\lceil \frac{2n}{j-3}\rceil=1$ it can be check that $K_{5}\subseteq G^1$, a contradiction. Hence assume that  $t=\lceil \frac{2n}{j-3}\rceil\geq 2$. So, one can so  that $K''=K_{4\times 2}\subseteq G^1[Y]$. Also as $j\geq 7$ and $|M|=n-1$, it is easy to say that there exist at least one edges of $M$ say $e=v_1v_2$, so that $v_1,v_2\notin X_i$ for each $i\in[4]$. Therefore w.l.g let $v_1\in X_5$ and $v_2\in X_6$. Now, by considering $v_1,v_2$ and $Y$, where $Y$ is the vertices of $K$ not belongs to $M$,  we have the following claim:  
	
	\begin{Claim}\label{c7}
		W.l.g. let $|N_{G^2}(v_1)\cap Y|\geq |N_{G^2}(v_2)\cap Y| $. If $|N_{G^2}(v_1)\cap Y|\geq 2$, then $|N_{G^2}(v_2)\cap Y|=0$. If $|N_{G^2}(v_1)\cap Y|=1$, then $|N_{G^2}(v_2)\cap Y|\leq 1$ and if $|N_{G^2}(v_1)\cap Y|=1$,  then $v_1$ and $v_2$ have the same neighbor in $Y$.
	\end{Claim}
	\begin{proof} By contradiction. Suppose that $\{v,v'\}\subseteq N_{G^2}(v_1)\cap Y$ and $v''\in N_{G^2}(v_2)\cap Y$, in this case, we set $ M' = (M\setminus \{v_1,v_2\}) \cup\{v_1v,v_2v''\}$. Clearly, $M'$ is a matching with $|M'| =n$, which contradicts the  $nK_2\nsubseteq G^2$. If $|N_{G^2}(v_i)\cap Y|=1$ and $v_i$ has a different neighbor then the proof is  same.
	\end{proof}
	Therefore by Claim \ref{c7} it can be check that for at least one $i\geq 4$, there exist at least one vertex of $X_i$, say $w$, so that $|N_{G^2}(w)\cap Y|\leq 1$. Hence $|N_{G^1}(w)\cap Y|\geq |Y|-1$. So, one can check that $K_5\subseteq G^1[V(K'')\cup \{w\}]$, a contradiction again.
	So, we have  $m_j(K_5, nK_2)= \lceil \frac{2n}{j-3}\rceil $ for each $j\geq 6$ and $n\geq 3$. 
	
\end{proof}
Combining Theorems  \ref{th4}, \ref{th5} and \ref{th6}, we
obtain the next theorem which characterize the exact value of the M-R-number
$m_j(K_5, nK_2)$ for $j\geq 6$ and each $n\geq 3$ as follows:

\begin{theorem}\label{t1} If $j \geq 2$, and $n\geq 1$, then:

	\[m_j(K_5, nK_2)=  \left\lbrace
	\begin{array}{ll}
		
		\infty& ~~~~~~~~~ ~if ~j=2,3,4,~~ ~~\vspace{.2 cm}\\
		\lceil\frac{2n}{j-3}\rceil& ~~~~~~ otherwise.~\vspace{.2 cm}\\
		
	\end{array}
	\right.
	\]	
	
\end{theorem}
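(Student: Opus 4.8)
The plan is to verify that the single closed form $\lceil 2n/(j-3)\rceil$, together with the value $\infty$ for small $j$, simultaneously reproduces all the values established in the preceding theorems, splitting the argument according to the size of $j$ relative to $m=5$. First I would observe that the lower bound is essentially free: for $j\geq 6$ Theorem \ref{t5} (applied with $m=5$) already gives $m_j(K_5,nK_2)\geq \lceil 2n/(j+2-5)\rceil=\lceil 2n/(j-3)\rceil$, so in each finite range it only remains to recognize the previously proved value as this ceiling. There are four ranges to treat: $j\in\{2,3,4\}$, $j=5$, $j=6$, and $j\geq 7$.

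For $j\in\{2,3,4\}$ I would apply Theorem \ref{t2} with $m=5$, since $2\leq j\leq m-1=4$; this immediately yields $m_j(K_5,nK_2)=\infty$ and matches the first branch. For $j=5$ I would invoke Theorem \ref{t3} (and Lemma \ref{l1} for $n\leq 2$), which gives $m_5(K_5,nK_2)=n$. Since $j-3=2$, this equals $\lceil 2n/2\rceil=n=\lceil 2n/(j-3)\rceil$, so the second branch holds on this range as well.

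For $j=6$ I would combine two sources. For $n\geq 6$, Theorem \ref{th5} gives $m_6(K_5,nK_2)=n-\lfloor n/3\rfloor$, and a short arithmetic check, splitting into $n\equiv 0,1,2 \pmod 3$, shows $n-\lfloor n/3\rfloor=\lceil 2n/3\rceil=\lceil 2n/(j-3)\rceil$. For the remaining small values I would use Theorem \ref{t4} (which gives $n-1$ for $n\in\{3,4,5\}$, again equal to $\lceil 2n/3\rceil$) together with the elementary identities $m_6(K_5,K_2)=1$ and $m_6(K_5,2K_2)=2$ recorded in the text, both of which are also $\lceil 2n/3\rceil$. This closes the $j=6$ case.

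The only genuinely substantive assembly is the range $j\geq 7$, where Theorem \ref{th6} is stated conditionally, upgrading the value at $n-1$ to the value at $n$. I would therefore run an induction on $n$. The base case is $n=1,2$, where $2n\leq 4\leq j-3$ forces $\lceil 2n/(j-3)\rceil=1$, and the text's observation $m_j(K_5,2K_2)=m_j(K_5,K_2)=1$ supplies the matching value; a direct argument (a $K_5$-free coloring of $K_j$ leaves a matching of size $2$ in the complement once $j\geq 7$) confirms this. Assuming $m_j(K_5,(n-1)K_2)=\lceil (2n-2)/(j-3)\rceil$, Theorem \ref{th6} then yields $m_j(K_5,nK_2)=\lceil 2n/(j-3)\rceil$, completing the inductive step for every $n\geq 3$. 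The hard part, such as it is, is purely bookkeeping: confirming that the induction hypothesis demanded by Theorem \ref{th6} at stage $n$ is exactly what stage $n-1$ delivers, and that the base value $1$ coincides with the ceiling; after these checks the four ranges combine to give the stated piecewise formula for all $j\geq 2$ and $n\geq 1$.
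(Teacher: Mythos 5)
Your proposal is correct and follows essentially the same route as the paper, which simply combines Theorems \ref{t2}, \ref{t3}, \ref{t4}, \ref{th4}, \ref{th5} and \ref{th6}; you merely make explicit the arithmetic identifications (e.g.\ $n-\lfloor n/3\rfloor=\lceil 2n/3\rceil$) and the induction on $n$ needed to discharge the hypothesis of the conditional Theorem \ref{th6}, details the paper leaves implicit.
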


 \subsection{General results}
\begin{theorem} \label{th7}
	Suppose that $j\geq m+2$, $m\geq 6$ and   $n\geq 3$. Given that  $m_j(K_m, (n-1)K_2)= \lceil \frac{2n-2}{j+2-m}\rceil $, then:
	\[m_j(K_m, nK_2)= \lceil \frac{2n}{j+2-m}\rceil .\]
	
\end{theorem}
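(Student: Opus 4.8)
The plan is to combine the lower bound already available from Theorem \ref{t5} with a matching-augmentation argument that mirrors the proofs of Theorems \ref{t7} and \ref{th6}, now carried out for a general clique size $m$. Since Theorem \ref{t5} gives $m_j(K_m,nK_2)\geq \lceil \frac{2n}{j+2-m}\rceil$, it suffices to prove the reverse inequality. Set $t=\lceil \frac{2n}{j+2-m}\rceil$ and suppose, for contradiction, that $K=K_{j\times t}$ is $2$-colorable to $(K_m,nK_2)$ via a coloring $(G^1,G^2)$ with $K_m\nsubseteq G^1$ and $nK_2\nsubseteq G^2$. Writing $t'=\lceil \frac{2n-2}{j+2-m}\rceil\leq t$ and applying the hypothesis $m_j(K_m,(n-1)K_2)=t'$ on the subboard $K_{j\times t'}\subseteq K$, the condition $K_m\nsubseteq G^1$ forces $(n-1)K_2\subseteq G^2$; together with $nK_2\nsubseteq G^2$ this means a maximum matching $M$ of $G^2$ has size exactly $n-1$.

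The key structural point, which drives the whole argument, is that the set $Y:=V(K)\setminus V(M)$ of $M$-unmatched vertices is independent in $G^2$: any $G^2$-edge inside $Y$ could be added to $M$, contradicting maximality. Hence every edge of $K$ inside $Y$ lies in $G^1$, i.e. $G^1[Y]$ is exactly the complete multipartite graph on the parts $Y\cap X_i$. A direct count gives $|Y|=jt-2(n-1)=(m-2)t+\big((j+2-m)t-2(n-1)\big)\geq (m-2)t+2$, since $(j+2-m)t\geq 2n$. Consequently $Y$ meets at least $m-1$ parts (it cannot fit into $m-2$ parts of size $t$), and it meets at most $m-1$ parts, for otherwise choosing one vertex from each of $m$ parts met by $Y$ would produce $K_m\subseteq G^1[Y]$. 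Thus $Y$ meets exactly $m-1$ parts, say $X_1,\ldots,X_{m-1}$, and the bound $|Y|\geq (m-2)t+2$ forces each of these parts to contain at least two vertices of $Y$; therefore $K_{(m-1)\times 2}\subseteq G^1[Y]$.

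It remains to produce one extra vertex $G^1$-adjacent to almost all of this $K_{(m-1)\times 2}$. Since all of $Y$ lies in $X_1,\ldots,X_{m-1}$, at most $(m-1)t-|Y|\leq t-2$ vertices of these parts are matched, so at most $t-2$ edges of $M$ meet $X_1\cup\ldots\cup X_{m-1}$; as $t\leq \lceil \frac{n}{2}\rceil\leq n$, at least $(n-1)-(t-2)=n-t+1\geq 1$ edges of $M$ lie entirely inside the remaining $j-m+1\geq 3$ parts. Fix such an edge $e=v_1v_2\in M$. Arguing exactly as in Claim \ref{c3} (equivalently Claim \ref{c7}), if $v_1$ had two distinct $G^2$-neighbors in $Y$ while $v_2$ had one more, we could replace $e$ by these two edges and enlarge $M$ to size $n$, a contradiction; hence one of $v_1,v_2$, say $w$, satisfies $|N_{G^2}(w)\cap Y|\leq 1$, so $|N_{G^1}(w)\cap Y|\geq |Y|-1$. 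Because $w$ lies in a part disjoint from $X_1,\ldots,X_{m-1}$, we may pick a transversal $K_{m-1}$ inside the copy of $K_{(m-1)\times 2}\subseteq G^1[Y]$ that avoids the at most one $G^2$-neighbor of $w$; then $w$ together with this $K_{m-1}$ yields $K_m\subseteq G^1$, the desired contradiction.

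I expect the substantive step to be the structural observation that $Y$ is $G^2$-independent, turning $G^1[Y]$ into a complete multipartite graph, and its interaction with the augmentation Claim \ref{c3}/\ref{c7}: the counting in the last paragraph is precisely what guarantees an edge $e=v_1v_2$ buried in the $j-m+1$ parts untouched by $Y$, and without such an edge the extra vertex $w$ could not be found. The hypotheses $j\geq m+2$ (so $j+2-m\geq 4$, giving $t\leq n$ and at least three spare parts) and $m\geq 6$ are used only to make this counting go through; the small-$t$ situation, in particular $t=1$, is not special, since then $|Y|\geq m$ already forces $Y$ to meet $m$ parts and hence $K_m\subseteq G^1$ immediately.
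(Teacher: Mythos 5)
Your proposal is correct and follows essentially the same argument as the paper: bound $|Y|\geq(m-2)t+2$ for the vertices missed by a maximum matching, deduce that $Y$ meets exactly $m-1$ parts and yields $K_{(m-1)\times 2}\subseteq G^1[Y]$, locate an edge of $M$ in the remaining parts, and apply the augmentation claim to extend to $K_m$. Your write-up is in fact tighter where the paper says ``it is easy to say'' --- in particular your explicit count showing at least $n-t+1\geq 1$ edges of $M$ avoid $X_1,\ldots,X_{m-1}$, and your direct handling of $t=1$ via $|Y|\geq m$ --- but these are refinements of the same proof, not a different route.
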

\begin{proof} 
	To prove $m_j(K_m, nK_2)\leq \lceil \frac{2n}{j+2-m}\rceil$ consider  $K=K_{j\times t}$  with partite sets $X_i=\{x_1^i,x_2^i,\ldots,x_{t}^i\}$ for each $i\in[j]$ and $t=\lceil \frac{2n}{j+2-m}\rceil$. Suppose that $K$ is $2$-colorable to $(K_m,nK_2)$, that is there exist 2-edge-coloring $(G^1,G^2)$ of  $K_{j\times t}$, where $K_m\nsubseteq G^1$ and $nK_2\nsubseteq G^2$.  Thus $K_m\nsubseteq G^1[V(K')]$ where $K'=K_{j\times t'}\subseteq K$,  $t'=\lceil \frac{2n-2}{j+2-m}\rceil$, so it can be check that  $(n-1)K_2\subseteq G^2[V(K')]$.  As $t=\lceil \frac{2n}{j+2-m}\rceil$ and $|M|=n-1$ where $M$ ba a M-M in $G^2$, then one can show that the following claim is true:
	
	\begin{Claim}\label{cl2}
		There exists at least $(m-2)t+2$ vertices of $K$, say $Y$ so that the vertices of $Y$ not belongs to $M$.
		
	\end{Claim}
	\begin{proof} It can be check that $|Y|=|V(K)|-2(n-1)$, hence $|Y|=jt-2(n-1)= (m-2)t +(j+2-m)t-2(n-1)= (m-2)t +(j+2-m)\lceil \frac{2n}{j+2-m}\rceil -2(n-1)$, as $(j+2-m)\lceil \frac{2n}{j+2-m}\rceil -2(n-1)\geq 2$ we have 	$|Y|\geq (m-2)t+2$, and the proof of claim is complete.
	\end{proof}
	If $Y\cap X_i\neq \emptyset$ for at least $m$ parts of $K$, then one can check that $K_m\subseteq G^1[Y]$,  a contradiction. Hence as $|Y|=(m-2)t+2$ and $|X_i|=t$, we can assume that $Y\cap X_i\neq \emptyset$ for $m-1$ parts. W.l.g let $Y\cap X_i\neq \emptyset$ for each $i\in[m-1]$. As $n\geq 3$, and $nK_2\nsubseteq G^2$ if $t=\lceil \frac{2n}{j+2-m}\rceil=1$ it can be check that $K_{m}\subseteq G^1$, a contradiction. Hence assume that  $t=\lceil \frac{2n}{j+2-m}\rceil\geq 2$. So it can be check that $K''=K_{(m-1)\times 2}\subseteq G^1[Y]$. Also as $j\geq m+2$ and $|M|=n-1$, it is easy to say that there exist at least one edges of $M$ say $e=v_1v_2$, so that $v_1,v_2\notin X_i$ for $i\in[m-1]$. Therefore w.l.g let $v_1\in X_m$ and $v_2\in X_{m+1}$. Now, by considering $v_1,v_2$ and $Y$, where $Y$ is the vertices of $K$ not belongs to $M$,  we have the following claim:  
	
	\begin{Claim}\label{cl3}
		W.l.g. let $|N_{G^2}(v_1)\cap Y|\geq |N_{G^2}(v_2)\cap Y| $. If $|N_{G^2}(v_1)\cap Y|\geq 2$, then $|N_{G^2}(v_2)\cap Y|=0$. If $|N_{G^2}(v_1)\cap Y|=1$, then $|N_{G^2}(v_2)\cap Y|\leq 1$ and if $|N_{G^2}(v_1)\cap Y|=1$,  then $v_1$ and $v_2$ have the same neighbor in $Y$.
	\end{Claim}
	\begin{proof} By contradiction. Suppose that $\{v,v'\}\subseteq N_{G^2}(v_1)\cap Y$ and $v''\in N_{G^2}(v_2)\cap Y$, in this case, we set $ M' = (M\setminus \{v_1,v_2\}) \cup\{v_1v,v_2v''\}$. Clearly, $M'$ is a matching with $|M'| =n$, which contradicts the  $nK_2\nsubseteq G^2$. If $|N_{G^2}(v_i)\cap Y|=1$ and $v_i$ has a different neighbor then the proof is  same.
	\end{proof}
	Therefore by Claim \ref{cl3} it can be check that for at least one $i\geq m$, there exist at least one vertex of $X_i$, say $w$, so that $|N_{G^2}(w)\cap Y|\leq 1$. Hence $|N_{G^1}(w)\cap Y|\geq |Y|-1$. So, one can check that $K_m\subseteq G^1[V(K'')\cup \{w\}]$, a contradiction again.
	So, we have  $m_j(K_m, nK_2)= \lceil \frac{2n}{j+2-m}\rceil $ for each $j\geq m+2$ and $n\geq 3$. 
	
\end{proof}

\bibliographystyle{plain}
\bibliography{yas4}

\end{document}